\documentclass[12pt]{amsart}
\usepackage{geometry, hyperref, verbatim, enumerate,amsthm, amsmath, amssymb, amscd, mathrsfs, color, tikz-cd}

\title{Some results on null ideals of finite rings}

\author{Nicholas J. Werner}
\address{Department of Mathematics, Computer and Information Science, SUNY at Old Westbury, Old Westbury, NY 11568,USA}
\email{wernern@oldwestbury.edu}

\numberwithin{equation}{section}

\theoremstyle{definition}\newtheorem{Def}[equation]{Definition}
\theoremstyle{plain}\newtheorem{Lem}[equation]{Lemma}
\theoremstyle{plain}\newtheorem{Prop}[equation]{Proposition}
\theoremstyle{plain}\newtheorem{Thm}[equation]{Theorem}
\theoremstyle{plain}
\theoremstyle{plain}
\theoremstyle{remark}\newtheorem{Rem}[equation]{Remark}
\theoremstyle{definition}\newtheorem{Ex}[equation]{Example}
\theoremstyle{definition}
\theoremstyle{definition}\newtheorem{Ques}[equation]{Question}
\theoremstyle{definition}

\newcommand{\ee}{\mathfrak{e}}
\newcommand{\ff}{\mathfrak{f}}
\newcommand{\msJ}{\mathscr{J}}

\newcommand{\Ieek}[1]{I_{\ee \ee}^{#1}}
\newcommand{\Iffk}[1]{I_{\ff \ff}^{#1}}
\newcommand{\Iefk}[1]{I_{\ee \ff}^{#1}}
\newcommand{\Ifek}[1]{I_{\ff \ee}^{#1}}

\newcommand{\mcI}{\mathcal{I}}
\newcommand{\mcN}{\mathcal{N}}
\newcommand{\mcR}{\mathcal{R}}

\newcommand{\F}{\mathbb{F}}

\begin{document}

\begin{abstract}
For a finite associative unital ring $R$, the null ideal of $R$ is the collection of polynomials with coefficients from $R$ that send each element of $R$ to zero under evaluation. It was conjectured that the null ideal of $R$ is always a two-sided ideal of its overlying polynomial ring. The conjecture was proved to be false with the construction of a subring of $4 \times 4$ upper triangular matrices over $\F_2$ for which the null ideal is not two-sided. The Jacobson radical of this counterexample ring has nilpotency 4. We prove that if the Jacobson radical of $R$ has nilpotency at most 3, then the null ideal of $R$ is two-sided. By extending the known counterexample ring, for each $n \geq 5$ we present a ring for which the null ideal is not two-sided, and the Jacobson radical has nilpotency $n$.
\end{abstract}

\maketitle

\begin{center}\today\end{center}

\section{Introduction}\label{sec: introduction}
\thispagestyle{empty}

Throughout, $R$ is a finite associative ring with unity and $R[x]$ is the ring of polynomials over $R$ in a central indeterminate $x$. When $R$ is noncommutative, we will follow standard practices as in \cite[\S 16]{Lam} for performing computations in $R[x]$. Since $x$ is central, polynomials can be added and multiplied as in the usual commutative case. Moreover, we will assume that polynomials satisfy right evaluation. So, before $f \in R[x]$ can be evaluated at an element of $R$, $f$ must be written so that powers of $x$ appear to the right of the coefficients. For instance, if $a, b \in R$ and $f(x)=(ax)(bx)$, then we must write $f(x)=abx^2$ before evaluating $f$. 

The purpose of this paper is to study the \textit{null ideal} $\mcN(R)$ of $R$, which is the set of polynomials in $R[x]$ that send each element of $R$ to 0 under evaluation. That is,
\begin{equation*}
\mcN(R) := \{f \in R[x] \mid f(a) = 0 \text{ for all } a \in R\}.
\end{equation*}
Recent studies on null ideals of finite rings include \cite{Rissner2016,Frisch2017,RogersWickham2017,RogersWickham2018,SwartzWerner2023,Werner2022,LeroyETAL2026}. Null ideals are notable for their connections to general polynomial functions on rings (see e.g.\ \cite{Al-EzehETAL2021, Al-Maktry2023}) and rings of integer-valued polynomials (see e.g.\ \cite[Section 2]{Werner2022} and \cite[Chapter XV]{ChabertIVP}). 

When $R$ is a commutative ring, it is trivial to verify that $\mcN(R)$ is an ideal of $R[x]$ (whence the term ``null ideal''), but the properties of $\mcN(R)$ are less clear when $R$ is noncommutative. Given $f(x), g(x) \in R[x]$, denote their product by $(fg)(x)$. The set $\mcN(R)$ is closed under addition, but because $f$ and $g$ may have coefficients that do not commute, it is possible that $(fg)(a) \ne f(a) g(a)$ for some $a \in R$. Thus, if $R$ is noncommutative, then it is not apparent whether or not $\mcN(R)$ is an ideal (left, right, or two-sided) of $R[x]$.

In \cite[Conjecture 3.3]{Werner2014}, it was conjectured that $\mcN(R)$ is a two-sided ideal of $R$ for every finite ring $R$. This conjecture is true for many classes of finite rings, including all finite commutative rings, all semisimple finite rings, and any ring of odd order \cite[Theorem 3.7]{Werner2014}. Nevertheless, \cite[Conjecture 3.3]{Werner2014} was proved false in \cite{Havlovec1}, in which the large language model GPT-5.6 Sol was used to find a finite ring for which the null ideal is not two-sided.

\begin{Ex}\label{ex: Havlovec} \cite{Havlovec1} 
For a prime power $q$, let $\F_q$ be the finite field with $q$ elements. Let $M_n(\F_q)$ be the ring of $n \times n$ matrices with entries from $\F_q$, and let $T_n(\F_q)$ be the ring of $n \times n$ upper triangular matrices with entries from $\F_q$. Define
\begin{equation*}
\mcR := \left\{\begin{pmatrix} A & B\\ 0 & A \end{pmatrix} \bigg| A \in T_2(\F_2), \; B \in M_2(\F_2) \right\} \subseteq M_4(\F_2).
\end{equation*}
It is shown in \cite[Theorem 2.1]{Havlovec1} that $\mcN(\mcR)$ is not a two-sided ideal of $\mcR[x]$.
\end{Ex}

The goal of the present article is to demonstrate that there is a sense in which the ring $\mcR$ from \cite{Havlovec1} is a minimal possible counterexample to \cite[Conjecture 3.3]{Werner2014}. For a ring $R$, let $\msJ(R)$ be the Jacobson radical of $R$. Often, we will set $J = \msJ(R)$. When $R$ is finite, $J$ is a nilpotent ideal of $R$, and the \textit{nilpotency} of $J$ is the smallest positive integer $n$ such that $J^n=\{0\}$. Each element of $\mcR$ is a $4 \times 4$ upper triangular matrix, and one may check that the Jacobson radical of $\mcR$ has nilpotency 4. Our main result is the following.

\begin{Thm}\label{thm: n=2, 3}
If the nilpotency of the Jacobson radical of $R$ is at most 3, then $\mcN(R)$ is a two-sided ideal of $R[x]$.
\end{Thm}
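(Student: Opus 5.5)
The plan is to reduce the two-sided question to right multiplication by a very special kind of element, and then use $J^3=0$ to control what remains.

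\textbf{Step 1 (reductions).} For $g=\sum g_i x^i$ and $f\in\mcN(R)$ we have $(gf)(a)=\sum_i g_i f(a)a^i=0$, so $\mcN(R)$ is always a left ideal. Since $x$ is central, it therefore suffices to show that $fr\in\mcN(R)$ for every $f\in\mcN(R)$ and $r\in R$. If $u$ is a unit, then
\[
(fu)(a)=\sum_j f_j u a^j=\sum_j f_j (uau^{-1})^j u=f(uau^{-1})u=0 .
\]
Since $\mcN(R)$ is additive, the set of $r\in R$ with $\mcN(R)r\subseteq \mcN(R)$ is an additive group. It contains all units, and hence contains $J$, because $j=(1+j)-1$. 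Write $R/J\cong\prod_i M_{n_i}(\F_{q_i})$. Every factor other than $\F_2$ is additively spanned by its units. After lifting the central idempotents of $R/J$ to idempotents of $R$, it remains only to show $f\ee\in\mcN(R)$, where $\ee$ is an idempotent lifting the central idempotent of the product of all the $\F_2$-factors (or possibly each such factor separately). Put $\ff=1-\ee$.

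\textbf{Step 2 (Peirce decomposition).} Decompose $a=a_{\ee\ee}+a_{\ee\ff}+a_{\ff\ee}+a_{\ff\ff}$. Because $\ee$ lifts a central idempotent of $R/J$, the off-diagonal pieces $\ee R\ff$ and $\ff R\ee$ lie in $J$. Moreover $\ee R\ee/\ee J\ee\cong\F_2^k$, so $a_{\ee\ee}$ is congruent modulo $\ee J\ee$ to an idempotent. The goal is to compute
\[
(f\ee)(a)=\sum_j f_j\,\ee a^j .
\]
Expanding $\ee a^j$ as a sum of words in the Peirce pieces, every word containing three factors from $J$ vanishes because $J^3=0$. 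So only words with at most two radical factors survive: the off-diagonal pieces, together with the radical parts of the diagonal pieces. The coefficients of these surviving words are explicit polynomial expressions in $j$, such as counts of positions.

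\textbf{Step 3 (main obstacle).} Express $\sum_j f_j\ee a^j$ as a $\mathbb{Z}$-linear combination of evaluations $f(b)\cdot c$ for suitable $b$ built from $a$, each of which vanishes since $f\in\mcN(R)$. Candidates for $b$ are
\[
\ee a\ee,\quad a_{\ee\ee}+a_{\ee\ff},\quad a_{\ee\ee}+a_{\ff\ee},\quad \ee a,\quad a\ee,
\]
their conjugates by units $1+t$ with $t\in\ee R\ff$ or $t\in\ff R\ee$, and their sums with elements of $\ff R\ff$. I would first work modulo $J^2$, where only words with a single radical factor survive. There the computation is essentially the commutative-style identity that works in the counterexample-free case, and it gives a usable formula, for instance by using $f((1+t)a(1-t))$ to isolate $t$-linear terms. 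I would then lift to handle the $J^2$ terms. The expected difficulty is that the words of $J$-length $2$ come with coefficients like $\binom{j}{2}$ or products of positions. These must be matched by second-order differences such as
\[
f(b+t+s)-f(b+t)-f(b+s)+f(b)
\]
over $\F_2$-characteristic pieces, where division by $2$ is unavailable. I expect this is exactly why nilpotency $4$ breaks down: a word of $J$-length $3$ would require a third-order difference that cannot be realized. Proving that every second-order term is realized is the heart of the argument, and I would first verify it in the local case $R/J\cong\F_2$ before assembling the general case via the Peirce blocks.
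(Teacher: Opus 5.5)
Your Step 1 is fine and is essentially the paper's reduction (Lemma \ref{lem: idempotents suffice}). You do, however, need to handle the idempotent lifting \emph{each} $\F_2$-summand separately. The idempotent lifting the product of all of them is $1$ when $R/J\cong\F_2\oplus\F_2$, so it gives nothing.

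The genuine gap is that Steps 2--3 contain no proof. The whole theorem is deferred to ``proving that every second-order term is realized,'' which you never do. To use $f(b)=0$ in your framework, you need an identity $\ee a^j=\sum_k n_k\, b_k^{\,j}c_k$ that holds for \emph{all} $j\ge 1$ simultaneously, with $n_k\in\mathbb{Z}$ and $b_k,c_k\in R$ independent of $j$. The coefficients $f_j$ are arbitrary and sit on the left, so nothing can be inserted between $f_j$ and the power. You exhibit no such $b_k,c_k$. This holds even when $J^2=0$, where ``the commutative-style identity'' is asserted rather than written down. Your proposed matching of the $\binom{j}{2}$-type coefficients of $J$-length-two words by second-order differences in characteristic $2$ is precisely the step you yourself flag as uncertain. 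It is also not clear that a single exact identity of this shape exists in $R$ at all.

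The missing idea is to accept identities that hold only modulo suitable two-sided ideals, and then intersect.

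\begin{itemize}
\item The paper builds the two-sided ideals $\Ieek{(1)},\Iffk{(1)},\Iefk{(1)},\Ifek{(1)}$ of Definition \ref{def: Ieek ideals}, whose intersection is $J^3=\{0\}$ (Proposition \ref{prop: Ieek ideals}).
\item Since $\mcN(R)=\bigcap_I\{g\in R[x]: g(R)\subseteq I\}$ over these four ideals, it suffices to show $f\ee$ is null modulo each $I$.
\item Modulo $\Ieek{(1)}$ or $\Iefk{(1)}$, the image of $\ff J$ is a two-sided ideal (Lemma \ref{lem: Iee is 0}). Take $a=r\ff+\ee r\ee$ and $b=\ff r\ee\in\ff J\ee$. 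Then $\ff r^i\equiv\ff a^i+r^i-a^i$ for every $i$.
\item Hence $(f\ff)(r)\equiv(f\ff)(a)+f(r)-f(a)=0$. Here $(f\ff)(a)=0$ by Lemma \ref{lem: what fe kills}, because $a\in R\ff+\ee R$.
\item Symmetrically, $f\ee$ is null modulo $\Iffk{(1)}$ and $\Ifek{(1)}$.
\end{itemize}

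When $J^3=0$, this amounts to $(f\ff)(r)\in\ee J^2$ and $(f\ee)(r)\in\ff J^2$. Since $(f\ee)(r)=-(f\ff)(r)$ and $\ee J^2\cap\ff J^2=\{0\}$, both vanish. These are first-order identities, uniform in $i$, of exactly the kind you are looking for, but they are valid only modulo an ideal. That is why no second-order differences ever appear in the paper's argument.
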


Thus, when characterizing finite rings based on the nilpotency of their Jacobson radical, $\mcR$ is a minimal example of a ring for which the null ideal is not two-sided. Whether $\mcR$ is a ring of minimal \textit{order} with this property is not known, and we leave this as an open problem.

\begin{Ques}\label{ques: minimal order}
The ring $\mcR$ of Example \ref{ex: Havlovec} has order 128. Does there exist a ring $R$ such that $|R| < 128$ and $\mcN(R)$ is not a two-sided ideal of $R[x]$?
\end{Ques}

In Section \ref{sec: prelim}, we establish our basic notations and recall a number of sufficient conditions under which $\mcN(R)$ is two-sided. In particular, the problem can be reduced to the consideration of products $f\ee$, where $f \in \mcN(R)$ and $\ee$ is an idempotent of $R$ lifted from $R/J$. In Section \ref{sec: Ideals of J}, we present methods to produce two-sided ideals of $R$ inside $J$. These ideals are used in Section \ref{sec: null} to prove Theorem \ref{thm: n=2, 3}. We close the paper in Section \ref{sec: counter} with a detailed analysis of the counterexample ring $\mcR$. We provide a list of sufficient conditions on a ring $R$ so that $\mcN(R)$ is not two-sided (see Definition \ref{def: counterex} and Theorem \ref{thm: counterex}). Using these conditions, we define an infinite family of rings (including and generalizing $\mcR$) for which the null ideal is not two-sided.

\section{Preliminaries and prior results}\label{sec: prelim}

As in the introduction, let $R$ be a finite ring with Jacobson radical $J$. Let $\pi:R \to R/J$ be the canonical quotient map. The residue ring $R/J$ is semisimple. Let $t \geq 1$ be the number of simple summands in a direct sum decomposition of $R$. That is,
\begin{equation}\label{eq: R/J}
R/J \cong \bigoplus_{i=1}^t M_{n_i}(F_i),
\end{equation}
where, for each $i$, $F_i$ is a finite field, $n_i \geq 1$, and $M_{n_i}(F_i)$ is the ring of $n_i \times n_i$ matrices with entries from $F_i$. Following \cite[Remark p.\ 116]{McD}, we can lift a system of orthogonal idempotents from $R/J$ to $R$.

\begin{Def}\label{def: E(R)}
For each $1 \leq i \leq t$, fix an idempotent $\ee_i$ of $R$ such that the following properties hold:
\begin{itemize}
\item $1 = \ee_1 + \cdots + \ee_t$,
\item $\ee_i \ee_j = 0$ if and only if $i \ne j$, and
\item $\ee_i \text{ mod } J$ is the multiplicative identity of $M_{n_i}(F_i)$.
\end{itemize}
Let $E(R) = \{\ee_1, \ldots, \ee_t\}$.
\end{Def}

Recall that $\mcN(R) = \{f \in R[x] \mid f(a) = 0 \text{ for all } a \in R\}$. It is clear that $\mcN(R)$ is closed under addition. For polynomials $f$ and $g$ satisfying right evaluation---as is the case in our setting---it is known that if $a \in R$ and $f(a) = 0$, then $(gf)(a)=0$. Thus, $\mcN(R)$ is always a left ideal of $R[x]$. To determine whether $\mcN(R)$ is a right ideal, it suffices to consider products of the form $fa$, where $f \in \mcN(R)$ and $a \in R$. By \cite[Lemma 2.3]{Werner2014}, $\mcN(R)$ is a two-sided ideal if and only if $fa \in \mcN(R)$ for every $f \in \mcN(R)$ and every $a \in R$. Furthermore, \cite[Lemma 3.5]{Werner2014} shows that $fu \in \mcN(R)$ for every $f \in \mcN(R)$ and all units $u \in R$. From this, it follows that if every element of $R$ is a sum of units of $R$, then $\mcN(R)$ is two-sided. Finite rings in which each element is a sum of units can be characterized by examining the summands of $R/J$.

\begin{Prop}\label{prop: sums of units}
\mbox{}
\begin{enumerate}[(1)]
\item \cite[Theorem 4.6]{Stewart1972} 
Each element of $R$ is a sum of units of $R$ if and only if $R/J$ contains no direct summand isomorphic to $\F_2 \oplus \F_2$.
\item Assume that $R/J$ has $m$ direct summands isomorphic to $\F_2$. Let $B = \bigoplus_{i=1}^m \F_2 \subseteq R/J$, so that $R/J \cong B \oplus R'$, where $R'$ has no direct summand isomorphic to $\F_2$. Let $\pi_B: R \to B$ be the canonical quotient map. Then, $a \in R$ is a sum of units if and only if $\pi_B(a) = 0_B$ or $\pi_B(a) = 1_B$.
\end{enumerate}
\end{Prop}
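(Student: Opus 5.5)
The plan is to reduce everything to the semisimple ring $R/J$ and then work one simple summand at a time. Part (2) is the sharper statement, so I would prove it first and obtain part (1) as a corollary.

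\emph{Step 1: reduce to $R/J$.} Recall that $u \in R$ is a unit if and only if $\pi(u)$ is a unit of $R/J$, because $J$ is the Jacobson radical. Suppose $a \in R$ is a sum of units. Then $\pi(a)$ is a sum of units of $R/J$. Conversely, suppose $\pi(a) = \sum_{i=1}^k \pi(u_i)$ with each $u_i$ a unit of $R$. Then $a = (u_1 + j) + u_2 + \cdots + u_k$ for some $j \in J$. Since $\pi(u_1+j) = \pi(u_1)$ is a unit, $u_1 + j$ is a unit of $R$. Hence $a$ is a sum of units of $R$ if and only if $\pi(a)$ is a sum of units of $R/J$. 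It is important that the number $k$ of summands is preserved by this argument, because $k$ will matter in Step 3.

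\emph{Step 2: the simple summands.} I would show the following. If $S = M_n(F)$ is not isomorphic to $\F_2$, then every element of $S$ is a sum of exactly $k$ units for every $k \geq 2$.

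\begin{itemize}
\item \emph{Reduction to $k=2$.} Assume every element of $S$ is a sum of two units. Given $s \in S$ and $k \geq 2$, apply this to $s - (k-2)\cdot 1$. Adding $k-2$ copies of the unit $1$ then writes $s$ as a sum of exactly $k$ units.
\item \emph{The case $n=1$.} Here $F$ is a field with $|F| \geq 3$. Given $s \in F$, choose $u \notin \{0, s\}$. Then $s = u + (s-u)$ is a sum of two units.
\item \emph{The case $n \geq 2$.} Write $s = P\,\mathrm{diag}(I_r,0)\,Q$ with $P, Q$ invertible. It then suffices to write $E_r = \mathrm{diag}(I_r,0)$ as a sum of two invertible matrices. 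This is the classical fact (Henriksen) that every matrix over any field is a sum of two invertibles when $n\ge 2$. Concretely, I would look for an invertible $U$ such that $E_r - U$ is also invertible. My first attempt is $U = E_r + C$, with $C$ a suitably chosen nilpotent shift (or permutation-type) matrix making both $E_r + C$ and $-C$ (or a variant) invertible. I would check the cases $r = 0$, $r = n$, and $0 < r < n$ separately.
\end{itemize}

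I expect this matrix case over $\F_2$, where no scalar trick is available, to be the main obstacle. If an explicit construction becomes messy, I would cite Henriksen's theorem.

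\emph{Step 3: assemble.} Units of a direct sum are exactly the componentwise units. Write $R/J \cong B \oplus R'$ with $B = \bigoplus_{i=1}^m \F_2$ as in (2).

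\begin{itemize}
\item \emph{The $B$-component.} The only unit of $\F_2$ is $1$. So a sum of $k$ units of $B$ equals $k\cdot 1_B$, which is $0_B$ or $1_B$ according to the parity of $k$.
\item \emph{Necessity in (2).} If $a$ is a sum of units of $R$, then by Step 1 so is $\pi(a)$. Hence $\pi_B(a) \in \{0_B, 1_B\}$.
\item \emph{Sufficiency in (2).} Suppose $\pi_B(a) \in \{0_B, 1_B\}$. Choose $k \in \{2,3\}$ with $k \cdot 1_B = \pi_B(a)$. By Step 2, the $R'$-component of $\pi(a)$ is a sum of exactly $k$ units in each simple summand. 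Combining these componentwise writes $\pi(a)$ as a sum of $k$ units of $R/J$. Step 1 then lifts this to $R$.
\end{itemize}

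\emph{Part (1).} The condition in (2) holds for every $a \in R$ precisely when $B$ has no elements other than $0_B$ and $1_B$. Since $\pi_B$ is surjective, this happens exactly when $m \le 1$, that is, when $R/J$ has no direct summand isomorphic to $\F_2 \oplus \F_2$. Conversely, if $m \geq 2$, lift an element of $B$ with one coordinate $1$ and another coordinate $0$. This element is not a sum of units.
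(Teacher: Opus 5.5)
Your proof is correct, but it runs in the opposite direction from the paper's. The paper takes (1) from Stewart and deduces (2). For ($\Leftarrow$), it writes the $R'$-component of $\pi(a)$ as a sum of units of $R'$ and adds $\pi_B(a)$. You instead prove (2) directly from the two-unit property of each simple component $M_n(F)\not\cong\F_2$, and recover (1) as the case $m\le 1$. So your argument depends on that classical matrix fact rather than on Stewart's theorem.

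The real advantage of your route is that you keep track of the number $k$ of summands. A unit $u$ of $R'$ sits in $R/J=B\oplus R'$ as $(0_B,u)$, which is not a unit when $B\neq 0$. So the paper's step ``hence $\pi(a)$ is a sum of units of $R/J$'' actually needs the $R'$-component to be a sum of exactly $k$ units of $R'$ with $k\cdot 1_B=\pi_B(a)$. Your choice $k\in\{2,3\}$ supplies exactly this. Within the paper's framework, one could instead fix the parity by splitting one summand using $1_{R'}=u+v$ with $u,v$ units of $R'$.

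Two small corrections:
\begin{enumerate}
\item The two-unit theorem for $M_n(F)$, $n\ge 2$, over a field is Zelinsky's. Henriksen's theorem gives three units over an arbitrary ring. That would also suffice if you take $k\in\{3,4\}$ instead.
\item Your tentative construction cannot use a nilpotent $C$, since $-C$ must be invertible. Taking $C$ to be the permutation matrix of an $n$-cycle does make $E_r+C$ invertible for $0\le r<n$. However, $I+C$ is singular over $\F_2$, so for $r=n$ you need a $U$ whose characteristic polynomial has no root in $\{0,1\}$. The companion matrix of an irreducible polynomial of degree $n$ works.
\end{enumerate}
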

\begin{proof}
(2) $(\Rightarrow)$ Assume that $a \in R$ is a sum of units of $R$. Then, $\pi(a)$ is a sum of units of $R/J$. Since $\F_2$ has a trivial unit group, so does $B$. It follows that $1_B$ and $0_B = 1_B + 1_B$ are the only elements of $B$ that are sums of units of $B$. Thus, either $\pi_B(a) = 0_B$ or $\pi_B(a) = 1_B$.

$(\Leftarrow)$ Assume that $\pi_B(a) = 0_B$ or $\pi_B(a) = 1_B$. By part (1), each element of $R'$ is a sum of units of $R'$. So, there exist units $u_1, \ldots, u_k \in R'$ such that $\pi(a) = \pi_B(a) + \sum_{i=1}^k u_i$. Hence, $\pi(a)$ is a sum of units of $R/J$. Since units can be lifted modulo the Jacobson radical of a ring, $a$ is a sum of units of $R$.
\end{proof}

From Proposition \ref{prop: sums of units}, we see that if a finite ring $R$ is such that $\mcN(R)$ is not two-sided, then $R/J$ must contain a copy of $\F_2 \oplus \F_2$. Indeed, this occurs for the ring $\mcR$ in \cite{Havlovec1}, for which $\mcR/\msJ(\mcR) \cong \F_2 \oplus \F_2$. While this condition is necessary for $\mcN(R)$ to fail to be two-sided, it is not sufficient. For instance, $\mcN(\F_2 \oplus \F_2)$ is two-sided, because $\F_2 \oplus \F_2$ is a commutative ring.

We end this section by showing that to determine whether or not $\mcN(R)$ is two-sided, it suffices to consider right multiplication of polynomials by the idempotents in $E(R)$.

\begin{Lem}\label{lem: idempotents suffice}
The following are equivalent.
\begin{enumerate}[(1)]
\item $\mcN(R)$ is a two-sided of $R[x]$.
\item For all $f \in \mcN(R)$ and all $a \in R$, $fa \in \mcN(R)$.
\item For all $f \in \mcN(R)$ and all $\ee \in E(R)$, $f\ee \in \mcN(R)$.
\end{enumerate}
\end{Lem}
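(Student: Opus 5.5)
(1)$\Leftrightarrow$(2) is exactly \cite[Lemma 2.3]{Werner2014}, and (2)$\Rightarrow$(3) is immediate because each $\ee \in E(R)$ is an element of $R$. So the plan is to prove (3)$\Rightarrow$(2). The strategy is to show that every $a \in R$ can be written as a sum $a = s + \sum_{i \in S} \ee_i$, where $s$ is a sum of units of $R$ and $S \subseteq \{1,\ldots,t\}$. Then, for $f \in \mcN(R)$, $fa = fs + \sum_{i\in S} f\ee_i$. Each $fu$ with $u$ a unit lies in $\mcN(R)$ by \cite[Lemma 3.5]{Werner2014}, so $fs \in \mcN(R)$ since $\mcN(R)$ is closed under addition. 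Each $f\ee_i$ lies in $\mcN(R)$ by hypothesis (3), and hence $fa \in \mcN(R)$.

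To find the decomposition, use Proposition \ref{prop: sums of units}(2). Order the summands in \eqref{eq: R/J} so that the summands isomorphic to $\F_2$ are indexed by $i = 1,\ldots,m$; these form $B$. For each $i \le m$, the idempotent $\ee_i$ reduces modulo $J$ to the identity of the $i$-th copy of $\F_2$, and it reduces to zero in every other summand. Given $a \in R$, write $\pi_B(a) = (c_1,\ldots,c_m)$ with $c_i \in \F_2$, and let $S = \{ i \le m : c_i = 1\}$. Set $s = a - \sum_{i\in S}\ee_i$. Then $\pi_B(s) = 0_B$, so by Proposition \ref{prop: sums of units}(2), $s$ is a sum of units of $R$. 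This completes the argument. (If $m=0$, then $S=\emptyset$ and $a$ itself is a sum of units.)

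The only point needing care is the computation of $\pi_B(\ee_i)$, namely that $\pi_B(\ee_i)$ is the $i$-th standard basis vector of $B$. This follows from the third bullet of Definition \ref{def: E(R)} together with the orthogonality $\ee_i\ee_j = 0$ for $i \ne j$: the images $\pi(\ee_j)$ are the identities of distinct simple summands. Beyond that, the argument is bookkeeping, and I expect no real obstacle.
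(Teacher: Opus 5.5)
Your proposal is correct and takes essentially the same approach as the paper. Both proofs write $a$ as a $\{0,1\}$-combination of the idempotents $\ee_1,\ldots,\ee_m$ lifting the $\F_2$ summands of $R/J$, plus an element whose image in $B$ is $0_B$; that element is a sum of units by Proposition~\ref{prop: sums of units}(2) and is then handled by the unit lemma. Your choice $s = a - \sum_{i\in S}\ee_i$ is, if anything, slightly cleaner bookkeeping than the paper's $a = b + r' + \alpha$.
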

\begin{proof}
The implications $(1) \Rightarrow (2)$ and $(2) \Rightarrow (3)$ are clear, and $(2) \Rightarrow (1)$ was proved in \cite[Lemma 2.3(2)]{Werner2014}. 

$(3) \Rightarrow (2)$ Assume that (3) holds. As in Proposition \ref{prop: sums of units}, decompose $R/J$ as $R/J \cong B \oplus R'$, where $B = \bigoplus_{i=1}^m \F_2 \subseteq R/J$ for some $m \geq 0$ and $R'$ has no direct summand isomorphic to $\F_2$ (if $m=0$, then $B=0$ and $R/J \cong R'$). Index the idempotents in $E(R)$ so that $\ee_1, \ldots, \ee_m$ correspond to the direct summands of $B$.

Let $f \in \mcN(R)$ and $a \in R$. Then, $a=b+r'+\alpha$, where $\alpha \in J$ and $b, r' \in R$ are such that $\pi(b) \in B$ and $\pi(r') \in R'$. Furthermore, we may take $b = \sum_{i=1}^m c_i \ee_i$, where each $c_i \in \{0,1\}$. By assumption, $fb \in \mcN(R)$. By Proposition \ref{prop: sums of units}, $r'+\alpha$ is a sum of units of $R$, so $f \cdot (r'+\alpha) \in \mcN(R)$ by \cite[Lemma 3.5]{Werner2014}. Thus, $fa \in \mcN(R)$.
\end{proof}

\section{Two-sided ideals of \texorpdfstring{$J$}{J}}\label{sec: Ideals of J}

Maintain the notation giving in Section \ref{sec: prelim}. Recall that the nilpotency of $J$ is the smallest positive integer $n$ such that $J^n=\{0\}$. Throughout this section, assume that $J$ has nilpotency $n \geq 2$. In this section, we describe constructions to produce two-sided ideals contained in $J$. All of these ideals involve Peirce decompositions using the system of orthogonal idempotents $E(R)$ from Definition \ref{def: E(R)}.

\begin{Lem}\label{lem: eiRej}
\mbox{}
\begin{enumerate}[(1)]
\item Each $\ee \in E(R)$ is central in $R/J$. Thus, for all $a \in R$ and all $\ee \in E(R)$, there exist $\alpha \in J$ such that $a \ee = \ee a + \alpha$.
\item Let $1 \leq i, j \leq t$ such that $i \ne j$. Then, $\ee_i R \ee_j = \ee_i J \ee_j $.
\item Let $\ee \in E(R)$ and let $\ff = 1 - \ee$. Then, $\ee R \ff = \ee J \ff$ and $\ff R \ee = \ff J \ee$.
\end{enumerate}
\end{Lem}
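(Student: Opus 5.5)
We prove the three parts in order, working throughout in the semisimple ring $R/J$ via $\pi$ and then pulling the conclusions back to $R$.

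\emph{Part (1).} By Definition \ref{def: E(R)}, $\pi(\ee_i)$ is the multiplicative identity of the simple summand $M_{n_i}(F_i)$ in the decomposition \eqref{eq: R/J}. The identity of a direct summand of a direct sum of rings is a central idempotent. Hence $\pi(\ee)$ commutes with $\pi(a)$ for every $a \in R$, so $\pi(a\ee - \ee a) = 0$. Setting $\alpha = a\ee - \ee a$ gives $\alpha \in J$ and $a\ee = \ee a + \alpha$.

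\emph{Part (2).} The inclusion $\ee_i J \ee_j \subseteq \ee_i R \ee_j$ is trivial. For the reverse, take $a \in R$. Using part (1), write $a\ee_j = \ee_j a + \alpha$ with $\alpha \in J$. Then
\[
\ee_i a \ee_j = \ee_i(\ee_j a + \alpha) = \ee_i \ee_j a + \ee_i \alpha = \ee_i\alpha ,
\]
since $\ee_i\ee_j = 0$ for $i \ne j$. Because $\ee_j$ is idempotent, we also have
\[
\ee_i a \ee_j = \ee_i a \ee_j \ee_j = \ee_i \alpha \ee_j \in \ee_i J \ee_j .
\]
Alternatively, one can argue directly: $\pi(\ee_i a \ee_j) = \pi(\ee_i)\pi(\ee_j)\pi(a) = 0$ by centrality, so $\ee_i a \ee_j \in J$, and $\ee_i a \ee_j = \ee_i(\ee_i a \ee_j)\ee_j$.

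\emph{Part (3).} The same argument works with $\ee$ in place of $\ee_i$ and $\ff = 1-\ee$ in place of $\ee_j$. First, $\ff$ is idempotent and $\ee\ff = \ff\ee = 0$. Second, $\pi(\ff) = 1 - \pi(\ee)$ is central in $R/J$, being a difference of central elements. So for $a \in R$ we get $\pi(\ee a \ff) = \pi(\ee\ff)\pi(a) = 0$, hence $\ee a \ff \in J$. Then
\[
\ee a \ff = \ee(\ee a \ff)\ff \in \ee J \ff .
\]
Symmetrically, $\ff a \ee \in \ff J \ee$. The reverse inclusions are again trivial.

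There is no real obstacle. The only point needing care is the centrality of $\pi(\ee)$ in $R/J$. This relies on $\pi(\ee_i)$ being the identity of an entire simple summand, as guaranteed by Definition \ref{def: E(R)}, and not merely a primitive idempotent within it.
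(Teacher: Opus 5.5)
Your proof is correct and follows essentially the same route as the paper. You use the centrality of $\pi(\ee_i)$ as the identity of a simple summand, then orthogonality to see that $\ee_i a \ee_j \in J$ (the paper phrases this as $\ee_i R \ee_j \subseteq J\ee_j \cap \ee_i J = \ee_i J \ee_j$), and finally sandwich by the idempotents; the only cosmetic difference is that the paper gets (3) from (2) by writing $\ff$ as a sum of the other elements of $E(R)$, whereas you rerun the argument directly with $\ff$, which works equally well.
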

\begin{proof}
(1) With $R/J$ as in \eqref{eq: R/J}, each $\ee_i$ is central in $M_{n_i}(F_i)$ and annihilates the ring $\bigoplus_{j \ne i} M_{n_j}(F_j)$ on both the left and the right. Then, $\ee_i$ is central in $R/J$. 

(2) By (1), $\ee_i R \ee_j \subseteq (R\ee_i + J)\ee_j$. Since $i \ne j$, $\ee_i \ee_j = 0$. Hence, $(R\ee_i + J)\ee_j = J\ee_j$. Similarly, $\ee_i R \ee_j \subseteq \ee_i J$. So, 
\begin{equation*}
\ee_i R \ee_j \subseteq J\ee_j \cap \ee_i J = \ee_i J \ee_j.
\end{equation*}
Clearly, $\ee_i J \ee_j \subseteq \ee_i R \ee_j$, so in fact $\ee_i R \ee_j = \ee_i J \ee_j$.

(3) This follows from (2).
\end{proof}

\begin{Lem}\label{lem: J^n-1}
\mbox{}
\begin{enumerate}[(1)]
\item Let $1 \leq i, j \leq t$. Then, $\ee_i J^{n-1} \ee_j$ is a two-sided ideal of $R$.
\item Let $\ee \in E(R)$ and let $\ff = 1 - \ee$. Then, $\ee J^{n-1} \ff$ is a two-sided ideal of $R$.
\end{enumerate}
\end{Lem}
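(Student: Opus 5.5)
The plan is to verify directly that $\ee_i J^{n-1} \ee_j$ is closed under left and right multiplication by arbitrary elements of $R$. It is clearly an additive subgroup. By symmetry it suffices to show that for $a \in R$ and $\alpha \in J^{n-1}$ we have $a \ee_i \alpha \ee_j \in \ee_i J^{n-1} \ee_j$; the right multiplication case is the mirror image.

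The key tool is Lemma~\ref{lem: eiRej}(1). It gives $a \ee_i = \ee_i a + \beta$ for some $\beta \in J$. Then
\[
a \ee_i \alpha \ee_j = \ee_i a \alpha \ee_j + \beta \alpha \ee_j .
\]
Since $J^{n-1}$ is a two-sided ideal, $a\alpha \in J^{n-1}$, so the first term lies in $\ee_i J^{n-1} \ee_j$. For the second term, $\beta\alpha \in J \cdot J^{n-1} = J^n = \{0\}$, so it vanishes. Hence $a\ee_i\alpha\ee_j \in \ee_i J^{n-1}\ee_j$.

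Symmetrically, for right multiplication we write $\ee_j a = a\ee_j + \gamma$ with $\gamma \in J$, again by Lemma~\ref{lem: eiRej}(1) (note $-\gamma$ and $\gamma$ both lie in $J$). This gives
\[
\ee_i\alpha\ee_j a = \ee_i(\alpha a)\ee_j + \ee_i\alpha\gamma ,
\]
and $\alpha\gamma \in J^n = 0$.

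For part (2), $\ff = 1-\ee$ is a sum of the other idempotents in $E(R)$, so
\[
\ee J^{n-1}\ff = \sum_{j} \ee J^{n-1}\ee_j ,
\]
with the sum over $\ee_j \neq \ee$. Each summand is an ideal by (1), and a sum of ideals is an ideal. The only point to check is that the sum of subsets equals $\ee J^{n-1}\ff$. The inclusion $\subseteq$ holds because $\ee_j = \ee_j\ff$. The inclusion $\supseteq$ holds because $\ee\alpha\ff = \sum_j \ee\alpha\ee_j$.

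Alternatively, one can repeat the computation of (1) directly, using that $\ff$ is also central modulo $J$, since $1$ and $\ee$ are. I expect no real obstacle; the only subtle step is noticing that the commutator error term lands in $J^n = 0$.
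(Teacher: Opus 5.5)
Your proof is correct and is essentially the paper's argument: both rest on $\ee_i$ being central modulo $J$, so the commutator error term multiplies $J^{n-1}$ into $J^n=\{0\}$, and both deduce (2) from (1) by writing $\ff$ as a sum of the other idempotents of $E(R)$. The differences are cosmetic. You apply Lemma~\ref{lem: eiRej}(1) to $a$ directly, rather than first writing $a=\sum_k s_k\ee_k+\alpha$ with $\alpha\in J$ as the paper does, and you explicitly verify the equality $\ee J^{n-1}\ff=\sum_{\ee_j\ne\ee}\ee J^{n-1}\ee_j$, which the paper leaves implicit.
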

\begin{proof}
(1) Let $a \in R$. There exist $s_1, \ldots, s_t \in R$ such that $\pi(a) = \pi(\sum_{i=1}^t s_i \ee_i)$. So, $a = \sum_{i=1}^t s_i \ee_i + \alpha$ for some $\alpha \in J$. Then, $\alpha(\ee_i J^{n-1} \ee_j) = \{0\}$ and $\ee_k \ee_i = 0$ when $k \ne i$, so
\begin{equation*}
a(\ee_i J^{n-1} \ee_j) = s_i\ee_i(\ee_i J^{n-1} \ee_j) = s_i\ee_i J^{n-1} \ee_j.
\end{equation*}
Next, the images of $s_i$ and $\ee_i$ commute in $R/J$, so $s_i \ee_i = \ee_i s_i + \alpha_i$ for some $\alpha_i \in J$. Since $\alpha_i(\ee_i J^{n-1} \ee_j) = \{0\}$, we have
\begin{equation*}
s_i\ee_i J^{n-1} \ee_j = \ee_i s_i J^{n-1} \ee_j \subseteq \ee_i J^{n-1} \ee_j.
\end{equation*}
Thus, $a(\ee_i J^{n-1} \ee_j) \subseteq \ee_i J^{n-1} \ee_j$. The proof that $(\ee_i J^{n-1} \ee_j)a \subseteq \ee_i J^{n-1} \ee_j$ is similar.

(2) This follows from (1), because $\ff$ is a sum of elements of $E(R)$.
\end{proof}

When $k < n-1$, there is no guarantee that $\ee J^k \ff$ is a two-sided ideal of $R$. However, if we include additional direct summands reminiscent of a Peirce decomposition of $J^k$, then we can produce other two-sided ideals contained in $J$.

\begin{Def}\label{def: Ieek ideals}
Assume that $n \geq 2$ and let $1 \leq k \leq n-1$. Let $\ee \in E(R)$ and let $\ff = 1 - \ee$. We define the following four additive subgroups of $J$:
\begin{align*}
\Ieek{(k)} &:= \ee J^k \ee \oplus \ee J^{k+1} \ff \oplus \ff J^{k+1} \ee \oplus \ff J^{k+2} \ff, \\
\Iffk{(k)} &:= \ff J^k \ff \oplus \ee J^{k+1} \ff \oplus \ff J^{k+1} \ee \oplus \ee J^{k+2} \ee, \\
\Iefk{(k)} &:= \ee J^k \ff \oplus \ee J^{k+1} \ee \oplus \ff J^{k+1} \ff \oplus \ff J^{k+2} \ee, \text{ and}\\
\Ifek{(k)} &:= \ff J^k \ee \oplus \ee J^{k+1} \ee \oplus \ff J^{k+1} \ff \oplus \ee J^{k+2} \ff.
\end{align*}

\end{Def}
\begin{Prop}\label{prop: Ieek ideals}
Assume that $n \geq 2$ and let $1 \leq k \leq n-1$. Let $\ee \in E(R)$ and let $\ff = 1 - \ee$.
\begin{enumerate}[(1)]
\item $\Ieek{(k)}$, $\Iffk{(k)}$, $\Iefk{(k)}$, and $\Ifek{(k)}$ are all two-sided ideals of $R$. 
\item $\Ieek{(k)} \cap \Iffk{(k)} \cap \Iefk{(k)} \cap \Ifek{(k)} = J^{k+2}$. 
\item When $n \geq 3$, $\Ieek{(n-2)} \cap \Iffk{(n-2)} \cap \Iefk{(n-2)} \cap \Ifek{(n-2)} = \{0\}$.
\end{enumerate}
\end{Prop}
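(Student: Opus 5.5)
Throughout, $J^m=\{0\}$ for $m \geq n$, so any summand with exponent $\geq n$ vanishes.

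\textbf{Key tool: a Peirce grading.} For $x,y\in\{\ee,\ff\}$ write $\delta(x,y)=0$ if $x=y$ and $\delta(x,y)=1$ otherwise. For each target pair $(x_0,y_0)$, the ideals in Definition \ref{def: Ieek ideals} all have the form
\begin{equation*}
I=\bigoplus_{x,y\in\{\ee,\ff\}} xJ^{k+d(x,y)}y,
\end{equation*}
where $d(x,y)=\delta(x,x_0)+\delta(y,y_0)$ counts mismatches with the target pair. For instance, $\Iefk{(k)}$ has $d(\ee,\ff)=0$, $d(\ee,\ee)=d(\ff,\ff)=1$ and $d(\ff,\ee)=2$. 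The sum is direct by the Peirce decomposition, since $xJy\subseteq xRy$.

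\textbf{Part (1).} First, $I$ is an additive subgroup of $J$. For two-sidedness, decompose $a\in R$ as
\begin{equation*}
a=\ee a\ee+\ee a\ff+\ff a\ee+\ff a\ff.
\end{equation*}
By Lemma \ref{lem: eiRej}(3), the off-diagonal pieces $\ee a\ff$ and $\ff a\ee$ lie in $J$. Consider a product $(zaw)(xjy)$ with $j\in J^{k+d(x,y)}$. It is zero unless $w=x$, so the cases are:
\begin{itemize}
\item $z=x$: the product lies in $xJ^{k+d(x,y)}y$, since $xJ^m y$ is stable under left multiplication by $xRx$.
\item $z\neq x$: then $zax\in zJx$, so the product lies in $zJ^{k+d(x,y)+1}y$.
\end{itemize}
In the second case we need $d(z,y)\leq d(x,y)+1$. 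This holds because changing one coordinate changes the mismatch count by at most $1$, and a higher power of $J$ only shrinks the set. Right multiplication is handled symmetrically. Everything is a two-line check, so I expect no obstacle here beyond bookkeeping. The one point needing care is that $\ee a\ee\cdot\ee J^m\ee\subseteq \ee J^m\ee$, which is immediate from $\ee a\ee\in R$ and $RJ^m\subseteq J^m$.

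\textbf{Part (2).} Intersecting the four ideals componentwise in the Peirce decomposition gives, for each $(x,y)$, the component $xJ^{k+m(x,y)}y$, where $m(x,y)$ is the maximum of $d(x,y)$ over the four target pairs. That maximum is always $2$, attained at the target pair opposite to $(x,y)$. Justifying the componentwise intersection is routine: for an element $u$ of the intersection, $xuy$ lies in each component, and the components are nested since $J^{a}\supseteq J^{b}$ for $a\le b$. So the intersection equals
\begin{equation*}
\bigoplus_{x,y} xJ^{k+2}y.
\end{equation*}
This equals $J^{k+2}$, because $J^{k+2}$ is a two-sided ideal: for $u\in J^{k+2}$ we have $u=\sum xuy$ with $xuy\in xJ^{k+2}y$.

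\textbf{Part (3).} Take $k=n-2$, which is allowed since $n\geq 3$ gives $1\le n-2\le n-1$. By part (2) the intersection is $J^n=\{0\}$.

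The only real subtlety, and the step I would check most carefully, is the mismatch-count inequality in part (1) for the cases $z\neq x$ where the exponent must increase. It works because $d$ changes by at most $1$ when one index flips.
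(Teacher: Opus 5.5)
Your proposal is correct and follows essentially the paper's argument. You Peirce-decompose and use $\ee R\ff=\ee J\ff$ and $\ff R\ee=\ff J\ee$ to gain one power of $J$ whenever the outer idempotent switches; then you intersect componentwise, noting that the ideal indexed by the opposite pair cuts each $(x,y)$-component down to $xJ^{k+2}y$. Your mismatch count $d(x,y)$ is a uniform packaging of the paper's explicit check for $\Ieek{(k)}$, with the minor benefit that it verifies all four ideals at once instead of leaving three of them as ``similar.''
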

\begin{proof}
(1) We prove that $\Ieek{(k)}$ is a two-sided ideal of $R$; the proofs for $\Iffk{(k)}$, $\Iefk{(k)}$, and $\Ifek{(k)}$ are similar. First, to show that $R\Ieek{(k)} \subseteq \Ieek{(k)}$, it suffices to have $R\ee\Ieek{(k)} \subseteq \Ieek{(k)}$ and $R\ff\Ieek{(k)} \subseteq \Ieek{(k)}$. Since $\ee\ff = 0$, we have $R\ee\Ieek{(k)} = R\ee(\ee J^k \ee) \oplus R\ee(\ee J^{k+1} \ff)$. Decomposing $R$ as $\ee R \oplus \ff R$ and recalling that $\ee R \ff = \ee J \ff$ and $\ff R \ee = \ff J \ee$ by Lemma \ref{lem: eiRej}, we obtain
\begin{align*}
R\ee\Ieek{(k)}  &= R\ee(\ee J^k \ee) \oplus R\ee(\ee J^{k+1} \ff) \\
&= \ee R\ee J^k \ee \oplus \ff R\ee J^k \ee \oplus \ee R \ee J^{k+1} \ff \oplus \ff R \ee J^{k+1}\ff\\
&\subseteq \ee J^k \ee \oplus \ff J^{k+1} \ee \oplus \ee J^{k+1} \ff \oplus \ff J^{k+2}\ff\\
&= \Ieek{(k)}.
\end{align*}
Likewise,
\begin{align*}
R\ff \Ieek{(k)} &= R \ff (\ff J^{k+1} \ee) \oplus R \ff (\ff J^{k+2} \ff)\\
&= \ee R \ff (\ff J^{k+1} \ee) \oplus \ff R \ff (\ff J^{k+1} \ee) \oplus \ee R \ff (\ff J^{k+2} \ff) \oplus \ff R \ff (\ff J^{k+2} \ff)\\
&\subseteq \ee J^{k+2} \ee \oplus \ff J^{k+1} \ee \oplus \ee J^{k+3} \ff \oplus \ff J^{k+2} \ff\\
&\subseteq \Ieek{(k)}.
\end{align*}

So far, we have shown that $R\Ieek{(k)} \subseteq \Ieek{(k)}$. Analogous arguments will prove that $\Ieek{(k)}R \subseteq \Ieek{(k)}$. Explicitly, one may verify that
\begin{align*}
\Ieek{(k)}\ee R &\subseteq \ee J^k \ee \oplus \ee J^{k+1} \ff \oplus \ff J^{k+1} \ee \oplus \ff J^{k+2} \ff = \Ieek{(k)}, \text{and}\\
\Ieek{(k)}\ff R &\subseteq \ee J^{k+2} \ee \oplus \ee J^{k+1} \ff \oplus \ff J^{k+3} \ee \oplus \ff J^{k+2} \ff \subseteq \Ieek{(k)}.
\end{align*}
Thus, $\Ieek{(k)}$ is a two-sided ideal of $R$.\\

(2) Let $I = \Ieek{(k)} \cap \Iffk{(k)} \cap \Iefk{(k)} \cap \Ifek{(k)}$. Apply a two-sided Peirce decomposition to $I$ to obtain
\begin{equation*}
I := \ee I \ee \oplus \ee I \ff \oplus \ff I \ee \oplus \ff I \ff,
\end{equation*}
where the direct sum is of additive groups. If $\alpha \in I$, then $\ee \alpha \ee \in \ee \Iffk{(k)} \ee = \ee J^{k+2} \ee$, so $\ee I \ee \subseteq \ee J^{k+2} \ee$. The reverse containment holds because $\ee J^{k+2} \ee \subseteq \Iffk{(k)} \subseteq I$, so $\ee J^{k+2} \ee = \ee(\ee J^{k+2} \ee)\ee \subseteq \ee I \ee$. Thus, $\ee I \ee = \ee J^{k+2} \ee$. Similarly, $\ee I \ff = \ee J^{k+2} \ff$, $\ff I \ee = \ff J^{k+2} \ee$, and $\ff I \ff = \ff J^{k+2} \ff$. The result follows by applying a two-sided Peirce decomposition to $J^{k+2}$.\\

(3) This follows from (2) and the fact that $J^n = \{0\}$.
\end{proof}

\begin{Lem}\label{lem: Iee is 0}
Assume $n \geq 2$ and let $1 \leq k \leq n-1$. Let $\ee \in E(R)$ and let $\ff = 1 - \ee$.
\begin{enumerate}[(1)]
\item If $\Ieek{(k)} = \{0\}$ or $\Iefk{(k)} = \{0\}$, then $\ff J^k$ is a two-sided ideal of $R$.
\item If $\Iffk{(k)} = \{0\}$ or $\Ifek{(k)} = \{0\}$, then $\ee J^k$ is a two-sided ideal of $R$.
\end{enumerate}
\end{Lem}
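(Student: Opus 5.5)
Since $\ff J^k$ is obviously a left ideal only after checking, the plan is to show $R\,\ff J^k \subseteq \ff J^k$ and $\ff J^k R \subseteq \ff J^k$ directly, using the Peirce decomposition with respect to $\ee,\ff$ together with Lemma \ref{lem: eiRej}(3). We only prove (1); part (2) follows by interchanging the roles of $\ee$ and $\ff$. This is legitimate because the definitions and Lemma \ref{lem: eiRej}(3) are symmetric in $\ee,\ff$: $\Iffk{(k)}$ and $\Ifek{(k)}$ play for $\ff$ the roles that $\Ieek{(k)}$ and $\Iefk{(k)}$ play for $\ee$. The only place $\ee\in E(R)$ is used is that $\ee R\ff=\ee J\ff$ and $\ff R\ee = \ff J\ee$, and these hold symmetrically.

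\textbf{Right multiplication.} Write $R=R\ee\oplus R\ff$. Then $\ff J^k R \subseteq \ff J^k$ trivially, because $J^k R \subseteq J^k$ since $J^k$ is a two-sided ideal. So right closure is automatic and needs no hypothesis.

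\textbf{Left multiplication.} Write $a = \ee a\ee + \ee a \ff + \ff a \ee + \ff a\ff$. The terms $\ff a\ee\cdot \ff J^k$ and $\ee a\ee \cdot \ff J^k$ vanish, and $\ff a \ff\cdot \ff J^k \subseteq \ff J^k$. The only problematic term is $\ee a \ff \cdot \ff J^k \subseteq \ee J\ff J^k$. By Lemma \ref{lem: eiRej}(3), $\ee a\ff \in \ee J\ff$, so this term lies in $\ee J^{k+1}$. The task is therefore to show that under either hypothesis this term is $0$ (it certainly does not lie in $\ff J^k$ otherwise, since $\ee\ff J^k\cap \ff J^k$ considerations force it to be zero). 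Split further on the right: $\ee J\ff J^k = \ee J \ff J^k \ee + \ee J\ff J^k \ff$. Here $\ee J\ff J^k\ee \subseteq \ee J^{k+1}\ee$ and $\ee J\ff J^k \ff \subseteq \ee J^{k+1}\ff$.

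\textbf{Using the hypotheses.} If $\Ieek{(k)}=\{0\}$, then its summand $\ee J^{k+1}\ff$ is zero. For the $\ee$-$\ee$ piece, use $J^k = J^{k-1}\cdot J$: the piece $\ff J^k\ee$ is contained in $\ff J^{k}\ee$, and then $\ee J\ff\cdot \ff J^k\ee \subseteq \ee J^{k+1}\ee$. The hypothesis that $\ee J^k\ee=0$ does not immediately kill $\ee J^{k+1}\ee$ directly, but $\ee J^{k+1}\ee \subseteq \ee J^k\ee$ as $J^{k+1}\subseteq J^k$, so it is zero. If instead $\Iefk{(k)}=\{0\}$, then $\ee J^{k+1}\ee=0$ and $\ee J^k\ff=0$, hence $\ee J^{k+1}\ff\subseteq \ee J^k\ff = 0$ as well. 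In either case $\ee J \ff J^k=0$, giving $R\,\ff J^k\subseteq \ff J^k$.

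The main obstacle is exactly this bookkeeping: recognizing that the only obstruction is the $\ee R\ff$-component acting on the left, and that each hypothesis kills both $\ee J^{k+1}\ee$ and $\ee J^{k+1}\ff$, using the inclusion $J^{k+1}\subseteq J^k$ where the hypothesis is stated at level $k$.
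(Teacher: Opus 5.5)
Your proof is correct and essentially the paper's: in both, right closure is trivial, part (2) follows by the $\ee\leftrightarrow\ff$ symmetry, and the operative consequence of either hypothesis is $\ee J^{k+1}\ee=\ee J^{k+1}\ff=0$, i.e.\ $\ee J^{k+1}=0$ (the paper states this as $J^{k+1}\subseteq \ff J^k$). The only difference is in handling a general left multiplier: the paper writes $r\ff=\ff r+\beta$ with $\beta\in J$ (Lemma \ref{lem: eiRej}(1)), whereas you Peirce-decompose the multiplier and isolate the cross term $\ee a\ff\in\ee J\ff$ using Lemma \ref{lem: eiRej}(3).
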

\begin{proof}
We prove (1); the proof of (2) is similar. Let $I = \ff J^k$, which is a right ideal of $R$. We begin by showing that $J^{k+1} \subseteq I$.

Suppose first that $\Ieek{(k)} = \{0\}$. Then, 
\begin{equation*}
\ee J^k \ee = \ee J^{k+1} \ff = \ff J^{k+1} \ee = \ff J^{k+2} \ff = \{0\},
\end{equation*}
which means that $J^{k+1} = \ff J^{k+1} \ff \subseteq I$. Next, consider the case where $\Iefk{(k)} = \{0\}$. This implies that
\begin{equation*}
\ee J^k \ff = \ee J^{k+1} \ee = \ff J^{k+1} \ff = \ff J^{k+2} \ee = \{0\},
\end{equation*}
and hence $J^{k+1} = \ff J^{k+1} \ee \subseteq \ff J^k$. Thus, from the stated assumptions, we can conclude that $J^{k+1} \subseteq I$.

Now, let $\alpha \in J$, and consider $\alpha I$. We have
\begin{equation*}
\alpha I = \alpha \ff J^k \subseteq J^{k+1} \subseteq I.
\end{equation*}
So, $I$ is closed under left multiplication by elements of $J$. Let $r \in R$. Since $\ff$ is central in $R/J$, there exists $\beta \in J$ such that $r\ff = \ff r + \beta$. Note that $\ee I = \ee \ff J^k = \{0\}$. We have
\begin{equation*}
rI = r\ee I + r \ff I = r \ff I = \ff r I + \beta I.
\end{equation*}
Now, $\ff r I = \ff r \ff J^k \subseteq \ff J^k = I$, and $\beta I \subseteq I$ by the previous paragraph. Thus, $rI \subseteq I$, and $I$ is a two-sided ideal of $R$.
\end{proof}

\section{Null ideals}\label{sec: null}

We now return our focus to the null ideal $\mcN(R)$ of $R$. By using Lemma \ref{lem: idempotents suffice} and the two-sided ideals constructed in Definition \ref{def: Ieek ideals}, we will prove that $\mcN(R)$ is a two-sided ideal of $R[x]$ whenever the nilpotency of $J$ is 2 or 3.

\begin{Lem}\label{lem: what fe kills}\cite[Lemma 4.2]{Werner2014}
Let $e$ be any idempotent of $R$ and let $f \in \mcN(R)$. Then, $(fe)(a)=0$ whenever $a \in eR$ or $a \in Re + (1-e)R$.
\end{Lem}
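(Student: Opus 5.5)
The plan is a direct computation. Write $f = \sum_{i=0}^m c_i x^i$, so that $fe = \sum_i c_i e\, x^i$ and, by right evaluation, $(fe)(a) = \sum_i c_i e a^i$ for every $a \in R$. The idea is to rewrite this sum as something visibly expressible through $f(b)$ for a suitable $b \in R$, and then use $f(b) = 0$. The two membership conditions on $a$ are handled separately. The only general facts needed are $f(b)=0$ for all $b\in R$, and in particular $c_0 = f(0) = 0$.

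\emph{Case $a \in Re + (1-e)R$.} Write $a = re + (1-e)s$. Then $ea = ere$ and $ae\cdot$ terms give $eae = ere$, so $ea = eae$, i.e.\ $ea(1-e) = 0$. I will prove by induction that
\begin{equation*}
ea^i = (ea)^i e \quad \text{for all } i \geq 0.
\end{equation*}
For $i=0$ both sides equal $e$. For the inductive step, $ea^{i+1} = (ea^i)a = (ea)^i e a = (ea)^i (ea)$. Moreover, using $ea = eae$, the product $(ea)^{i+1}$ ends in $e$, so $(ea)^{i+1} = (ea)^{i+1}e$. Hence
\begin{equation*}
(fe)(a) = \sum_i c_i (ea)^i e = f(ea)\, e = 0.
\end{equation*}
Here we evaluate $f$ at the element $ea$ and then right-multiply by $e$.

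\emph{Case $a \in eR$.} Use $fe = f - f(1-e)$, so that
\begin{equation*}
(fe)(a) = f(a) - \sum_i c_i (1-e)a^i.
\end{equation*}
Since $a = ea$, we have $(1-e)a^i = 0$ for $i \geq 1$, and only the $i=0$ term survives, giving $(fe)(a) = -c_0(1-e)$. But $c_0 = f(0) = 0$ because $f \in \mcN(R)$, so $(fe)(a) = 0$.

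The only step needing care is the first case. There the obstacle is the $i=0$ term, where $ea^0 = e$ rather than $1$. This is exactly why the comparison is made with $(ea)^i e$ rather than $(ea)^i$, and why the identity $ea = eae$ (coming from $a \in Re + (1-e)R$) is essential. Once that identity is in hand, the rest is routine bookkeeping with right evaluation.
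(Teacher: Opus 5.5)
Your proof is correct. In the case $a \in Re + (1-e)R$, the identity $ea = eae$ gives $ea^i = (ea)^i e$, hence $(fe)(a) = f(ea)\,e = 0$. In the case $a \in eR$, the relations $(1-e)a = 0$ and $f(0) = 0$ settle it. The paper does not prove this lemma itself; it quotes it from \cite[Lemma 4.2]{Werner2014}, and your direct computation is the natural verification of that cited result.
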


\begin{Lem}\label{lem: poly decomp}
Assume $n \geq 2$ and let $1 \leq k \leq n-1$. Let $\ee \in E(R)$ and let $\ff = 1 - \ee$.
\begin{enumerate}[(1)]
\item Assume $\Ieek{(k)} = \{0\}$ or $\Iefk{(k)} = \{0\}$. Let $a \in R \ff + \ee R$ and let $b \in \ff J^k$. Then, for all $i \geq 1$,
\begin{equation*}
\ff(a+b)^i = \ff a^i + (a+b)^i - a^i.
\end{equation*}

\item Assume $\Iffk{(k)} = \{0\}$ or $\Ifek{(k)} = \{0\}$. Let $a \in R \ee + \ff R$ and let $b \in \ee J^k$. Then, for all $i \geq 1$,
\begin{equation*}
\ee(a+b)^i = \ee a^i + (a+b)^i - a^i.
\end{equation*}
\end{enumerate}
\end{Lem}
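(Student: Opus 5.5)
The plan is to reduce everything to the fact, given by Lemma \ref{lem: Iee is 0}, that under the stated hypotheses $\ff J^k$ (respectively $\ee J^k$) is a two-sided ideal of $R$. I will prove (1); part (2) follows by the symmetric argument with the roles of $\ee$ and $\ff$ exchanged, using Lemma \ref{lem: Iee is 0}(2) in place of Lemma \ref{lem: Iee is 0}(1).

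Fix $i \geq 1$ and put $I = \ff J^k$. Since $\Ieek{(k)} = \{0\}$ or $\Iefk{(k)} = \{0\}$, Lemma \ref{lem: Iee is 0}(1) says that $I$ is a two-sided ideal of $R$. Expand $(a+b)^i$ as a sum of the $2^i$ ordered words of length $i$ in the letters $a$ and $b$; this is legitimate even though $a$ and $b$ need not commute. The single word containing no $b$ is $a^i$, so
\begin{equation*}
(a+b)^i - a^i = \sum w,
\end{equation*}
where the sum runs over the words $w$ containing at least one factor $b$.

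Each such $w$ has the form $u\,b\,v$ with $u, v \in R$. Since $b \in I$ and $I$ is two-sided, $w \in I$. Hence $(a+b)^i - a^i \in I = \ff J^k \subseteq \ff R$. Because $\ff$ is idempotent, $\ff y = y$ for every $y \in \ff R$, and so
\begin{equation*}
\ff\bigl((a+b)^i - a^i\bigr) = (a+b)^i - a^i.
\end{equation*}
Consequently
\begin{equation*}
\ff(a+b)^i = \ff a^i + \ff\bigl((a+b)^i - a^i\bigr) = \ff a^i + (a+b)^i - a^i,
\end{equation*}
which is the claimed identity.

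The only nontrivial input is the two-sidedness of $\ff J^k$. Without it, a word such as $b\,a$ could leave $\ff R$, since $a$ may have components in $\ee R$; Lemma \ref{lem: Iee is 0} is exactly what controls this. The hypothesis $a \in R\ff + \ee R$ is not needed for the identity itself. It is presumably included so that the lemma can later be combined with Lemma \ref{lem: what fe kills}, which guarantees that $(f\ee)(a) = 0$ for such $a$.
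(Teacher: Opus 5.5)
Your proof is correct and follows the paper's argument. Like the paper, you use Lemma \ref{lem: Iee is 0} to get that $\ff J^k$ is two-sided, deduce $(a+b)^i - a^i \in \ff J^k$, and then use $\ff\alpha = \alpha$ for $\alpha \in \ff J^k$. Your remark that the hypothesis $a \in R\ff + \ee R$ is not needed is also accurate: the paper's proof doesn't use it either, and it only matters later, when the lemma is combined with Lemma \ref{lem: what fe kills} in Proposition \ref{prop: poly decomp n=3}.
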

\begin{proof}
We prove (1); the proof of (2) is similar. Since either $\Ieek{(k)} = \{0\}$ or $\Iefk{(k)} = \{0\}$, we know that $\ff J^k$ is a two-sided ideal of $R$ by Lemma \ref{lem: Iee is 0}. So, for each $i \geq 1$, there exists $\alpha_i \in \ff J^k$ such that $(a+b)^i = a^i + \alpha_i$. Then,
\begin{equation*}
\ff(a+b)^i = \ff a^i + \ff \alpha_i = \ff a^i + \alpha_i = \ff a^i + (a+b)^i - a^i. \qedhere 
\end{equation*}
\end{proof}

\begin{Prop}\label{prop: poly decomp n=3}
Assume $n \geq 2$. Let $\ee \in E(R)$ and let $\ff = 1 - \ee$. If at least one of $\Ieek{(1)}$, $\Iffk{(1)}$, $\Iefk{(1)}$, or $\Ifek{(1)}$ is the zero ideal, then $f\ee \in \mcN(R)$ for every $f \in \mcN(R)$.
\end{Prop}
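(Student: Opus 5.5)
The plan is to evaluate $f\ee$ (or $f\ff$) at an arbitrary $r \in R$ and reduce to an element where Lemma \ref{lem: what fe kills} applies. The key tool is Lemma \ref{lem: poly decomp} with $k=1$. Write $f = \sum_i c_i x^i \in \mcN(R)$, so that $(fg)(r) = \sum_i c_i g r^i$ for $g \in \{\ee, \ff\}$. Since $f = f\ee + f\ff$ and $\mcN(R)$ is closed under subtraction, $f\ee \in \mcN(R)$ if and only if $f\ff \in \mcN(R)$. This lets us choose which idempotent to work with. By symmetry in the hypotheses, there are two cases.

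\emph{Case 1: $\Iffk{(1)} = \{0\}$ or $\Ifek{(1)} = \{0\}$.} Given $r \in R$, decompose it using the Peirce decomposition as
\begin{equation*}
r = r\ee + \ee r \ff + \ff r \ff.
\end{equation*}
Set $a = r\ee + \ff r\ff \in R\ee + \ff R$ and $b = \ee r \ff \in \ee R \ff = \ee J \ff \subseteq \ee J$, using Lemma \ref{lem: eiRej}. Lemma \ref{lem: poly decomp}(2) gives $\ee r^i = \ee a^i + r^i - a^i$ for all $i \geq 1$. For $i = 0$ this identity reads $\ee = \ee + 1 - 1$, so it holds trivially. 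Multiplying by $c_i$ and summing gives
\begin{equation*}
(f\ee)(r) = (f\ee)(a) + f(r) - f(a).
\end{equation*}
Here $f(r) = f(a) = 0$ because $f \in \mcN(R)$. Also $(f\ee)(a) = 0$ by Lemma \ref{lem: what fe kills} applied to the idempotent $\ee$, since $a \in R\ee + (1-\ee)R$. Hence $(f\ee)(r) = 0$.

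\emph{Case 2: $\Ieek{(1)} = \{0\}$ or $\Iefk{(1)} = \{0\}$.} Here we work with $f\ff$ instead. Decompose
\begin{equation*}
r = r\ff + \ff r \ee + \ee r \ee.
\end{equation*}
Set $a = r\ff + \ee r \ee \in R\ff + \ee R$ and $b = \ff r \ee \in \ff J \ee \subseteq \ff J$. Lemma \ref{lem: poly decomp}(1) gives $\ff r^i = \ff a^i + r^i - a^i$, so $(f\ff)(r) = (f\ff)(a) + f(r) - f(a)$. We have $(f\ff)(a) = 0$ by Lemma \ref{lem: what fe kills} with the idempotent $\ff$, since $R\ff + \ee R = R\ff + (1-\ff)R$. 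Thus $f\ff \in \mcN(R)$, and therefore $f\ee = f - f\ff \in \mcN(R)$.

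The only delicate point is matching the decomposition of $r$ to the right idempotent so that two conditions hold together. First, the "error" term $b$ must land in the two-sided ideal $\ee J$ or $\ff J$ supplied by Lemma \ref{lem: Iee is 0}. Second, the main term $a$ must lie in a set killed by Lemma \ref{lem: what fe kills}. In Case 2 this requires passing from $f\ee$ to $f\ff$. I also need to check the $i = 0$ term and that evaluation is additive over $i$; both are immediate.
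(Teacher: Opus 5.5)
Your proof is correct and follows the paper's argument essentially verbatim. The paper likewise passes to $f\ff$ (resp.\ $f\ee$) and splits $r = a + b$ via $R = R\ff \oplus \ee R\ee \oplus \ff J\ee$ (resp.\ $R = R\ee \oplus \ff R\ff \oplus \ee J\ff$), then combines Lemma \ref{lem: poly decomp} with Lemma \ref{lem: what fe kills}; the only cosmetic difference is that you check the $i=0$ term directly, whereas the paper notes that the constant coefficient of $f$ vanishes since $f(0)=0$.
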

\begin{proof}
Let $f \in \mcN(R)$. Note that $f\ee \in \mcN(R)$ if and only if $f\ff \in \mcN(R)$ because $f\ff = f - f\ee$.

Suppose first that either $\Ieek{(1)} = \{0\}$ or $\Iefk{(1)} = \{0\}$. By Lemma \ref{lem: eiRej}, $\ff R \ee = \ff J \ee$, so we may decompose $R$ as
\begin{equation*}
R = R\ff \oplus R\ee = R \ff \oplus \ee R \ee \oplus \ff R \ee = R \ff \oplus \ee R \ee \oplus \ff J \ee.
\end{equation*}
Consequently, given $r \in R$, we may write $r = a + b$, where $a \in R\ff \oplus \ee R \ee$ and $b \in \ff J \ee$. Let $f(x) = \sum_{i \geq 1} c_i x^i$; note that the constant coefficient of $f$ is 0 because $f(0) = 0$. By Lemma \ref{lem: poly decomp}(1), we have
\begin{align*}
(f\ff)(r) &= \sum_i c_i \ff r^i\\
&= \sum_i c_i \ff (a+b)^i\\
&= \sum_i c_i \big(\ff a^i + (a+b)^i - a^i\big)\\
&=(f\ff)(a) + f(r) - f(a).
\end{align*}
By Lemma \ref{lem: what fe kills}, $(f\ff)(a) = 0$, and both $f(r)$ and $f(a)$ are 0 because $f \in \mcN(R)$. Thus, $f\ff \in \mcN(R)$, as desired.

If either $\Iffk{(1)} = \{0\}$ or $\Ifek{(1)} = \{0\}$, then we use the same argument, but switch the roles of $\ee$ and $\ff$. In these cases,  write $R$ as
\begin{equation*}
R = R \ee \oplus \ff R \ff \oplus \ee J \ff,
\end{equation*}
and let $r = a + b$, where $a \in R\ee \oplus \ff R \ff$ and $b \in \ee J \ff$. Applying Lemma \ref{lem: poly decomp}(2) to $(f\ee)(r)$ shows that $f\ee \in \mcN(R)$.
\end{proof}

We now have all of the tools necessary to prove Theorem \ref{thm: n=2, 3}.

\begin{proof}[Proof of Theorem \ref{thm: n=2, 3}]
Let $J$ be the Jacobson radical of $R$ and assume that $J$ has nilpotency $n \leq 3$. If $n=1$, then $R$ is semisimple and $\mcN(R)$ is two-sided by \cite[Theorem 3.7]{Werner2014}. So, assume that $n=2$ or $n=3$. Let $\ee \in E(R)$ and let $\ff = 1 - \ee$. By Proposition \ref{prop: Ieek ideals}, all of $\Ieek{(1)}$, $\Iffk{(1)}$, $\Iefk{(1)}$, and $\Ifek{(1)}$ are two-sided ideals of $R$. For each $I \in \{\Ieek{(1)}, \Iffk{(1)}, \Iefk{(1)}, \Ifek{(1)}\}$, let $N(R,I) = \{f \in R[x] \mid f(R) \subseteq I\}$ and let $\pi_I: R[x] \to (R/I)[x]$ be the canonical quotient map. Then, for each $I$, $N(R,I)$ is the inverse image of $\mcN(R/I)$ under $\pi_I$. Furthermore, by Proposition \ref{prop: poly decomp n=3}, $\pi_I(f\ee) \in \mcN(R/I)$ for each $I$ and for every $f \in \mcN(R)$. Thus, $f\ee \in \mcN(R,I)$ for each $I$ and every $f \in \mcN(R)$. Finally, $\Ieek{(1)} \cap \Iffk{(1)} \cap \Iefk{(1)} \cap \Ifek{(1)} = \{0\}$ by Proposition \ref{prop: Ieek ideals}, so
\begin{equation*}
\mcN(R) = \mcN(R, \Ieek{(1)}) \cap \mcN(R, \Iffk{(1)}) \cap \mcN(R, \Iefk{(1)}) \cap \mcN(R, \Ifek{(1)}),
\end{equation*}
and hence $f\ee \in \mcN(R)$. Since this holds for all $\ee \in E(R)$ and all $f \in \mcN(R)$, $\mcN(R)$ is a two-sided ideal of $R[x]$ by Lemma \ref{lem: idempotents suffice}.
\end{proof}

\section{Counterexamples to the null ideal conjecture}\label{sec: counter}

In the recent preprint \cite{Havlovec1}, a large language model was used to produce a counterexample to \cite[Conjecture 3.3]{Werner2014}, which proposed that $\mcN(R)$ is a two-sided ideal of $R[x]$ for every finite ring $R$. In this section, we examine this counterexample and present some conditions that can be used to produce more general finite rings for which the null ideal is not two-sided.

As in Example \ref{ex: Havlovec}, let
\begin{equation*}
\mcR := \left\{\begin{pmatrix} A & B\\ 0 & A \end{pmatrix} \bigg| A \in T_2(\F_2), \; B \in M_2(\F_2) \right\},
\end{equation*}
which is a finite ring of order 128 with Jacobson radical of nilpotency 4. By \cite[Theorem 2.1]{Havlovec1}, $\mcN(\mcR)$ is not a two-sided ideal of $\mcR[x]$. In \cite{Havlovec1}, this is proved directly by giving a specific polynomial $f \in \mcR[x]$ and a specific element $P \in \mcR$ such that $f \in \mcN(\mcR)$ but $fP \notin \mcN(\mcR)$. In the spirit of the prior sections of this paper, we will describe the failure of $\mcN(\mcR)$ to be two-sided in terms of the Jacobson radical and idempotents of $\mcR$. This more axiomatic approach allows us to construct other rings that are also counterexamples to \cite[Conjecture 3.3]{Werner2014}, and affords a more uniform treatment of the counterexamples. 

\begin{Def}\label{def: counterex}
Let $R$ be a finite ring with Jacobson radical $J$. We call $R$ a \textit{counterexample ring} if $R$ satisfies all of the properties below.
\begin{enumerate}[(1)]
\item $R/J \cong \F_2 \oplus \F_2$.
\item $J$ has nilpotency $n \geq 4$.
\item There exists $r \in R$ such that $(r^2+r)^{n-1} \ne 0$.
\item There exists $\ee \in E(R)$ such that the following three conditions hold (as usual, we let $\ff = 1 - \ee$):
\begin{itemize}
\item[(4a)] $J^{n-1} = \ee J^{n-1} \ff$.
\item[(4b)] $\ee J^{n-2} \ff = \ee J^{n-1} \ff$.
\item[(4c)] As additive groups, $\ee J^2 \ff$ is an index 2 subgroup of $\ee J \ff$. That is, $|\ee J \ff / \ee J^2 \ff| = 2$. In particular, this means that $\ee J \ff \setminus \ee J^2 \ff$ is nonempty.
\end{itemize}
\end{enumerate}
\end{Def}

The conditions listed in Definition \ref{def: counterex} are inspired by properties of $\mcR$, and any ring $R$ satisfying these conditions will be such that $\mcN(R)$ is not two-sided.

\begin{Lem}\label{lem: counterex lem}
Let $R$ be a counterexample ring.
\begin{enumerate}[(1)]
\item Let $y \in J$ and $z \in J^{n-2}$. Then, $yz = (\ee y \ff) z \ff$ and $zy = \ee z(\ee y \ff)$.

\item Let $\beta \in \ee J \ff \setminus \ee J^2 \ff$. Then, for all $\alpha \in J$, $\beta \alpha^{n-2} = \alpha^{n-1}$.
\end{enumerate}
\end{Lem}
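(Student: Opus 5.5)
The plan is to use the Peirce decomposition with respect to $\ee,\ff$ and to reduce every product landing in $J^{n-1}$ to its $\ee\,\cdot\,\ff$ corner via (4a). For (1), the product $yz$ lies in $J^{n-1}=\ee J^{n-1}\ff$ by (4a), so $yz=\ee yz\ff$. Next I split $\ee y=\ee y\ee+\ee y\ff$, which gives $yz=\ee y\ee z\ff+(\ee y\ff)z\ff$.

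The term $\ee y\ee z\ff$ vanishes, by splitting $z\ff=\ee z\ff+\ff z\ff$:
\begin{itemize}
\item $\ee y\ee\cdot\ff z\ff=0$ because $\ee\ff=0$;
\item $\ee y\ee\cdot \ee z\ff\in J\cdot \ee J^{n-2}\ff$, and $\ee J^{n-2}\ff=\ee J^{n-1}\ff$ by (4b), so this product lies in $J^n=0$.
\end{itemize}
The identity $zy=\ee z(\ee y\ff)$ is proved symmetrically. We write $zy=\ee zy\ff$ and split $y\ff=\ee y\ff+\ff y\ff$. Then $\ee z\ff y\ff\in \ee J^{n-1}\ff\cdot J=0$ by (4b), and $\ee z\ee\cdot\ff y\ff=0$.

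For (2), I fix $\alpha\in J$ and use (4c): $\ee\alpha\ff$ lies in one of the two cosets of $\ee J^2\ff$ in $\ee J\ff$. So $\ee\alpha\ff=c\beta+\gamma$ with $c\in\{0,1\}$ and $\gamma\in\ee J^2\ff$. Applying (1) with $y=\alpha$ and $z=\alpha^{n-2}$ gives
\begin{equation*}
\alpha^{n-1}=(\ee\alpha\ff)\alpha^{n-2}\ff=c\,\beta\alpha^{n-2}\ff+\gamma\alpha^{n-2}\ff=c\,\beta\alpha^{n-2},
\end{equation*}
since $\gamma\alpha^{n-2}\in J^n=0$ and $\beta\alpha^{n-2}\in J^{n-1}=J^{n-1}\ff$. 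When $c=1$ we are done.

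The main obstacle is the case $c=0$, where we must show $\beta\alpha^{n-2}=0$. I would set $\alpha'=\alpha-\gamma=\ee\alpha\ee+\ff\alpha\ff+\ff\alpha\ee$, so that $\ee\alpha'\ff=0$. Expanding $\alpha^{n-2}=(\alpha'+\gamma)^{n-2}$, every word containing $\gamma$ lies in $J^{n-1}$, and $\beta J^{n-1}\subseteq J^n=0$. Hence $\beta\alpha^{n-2}=\beta\ff\alpha'^{n-2}$. Since $\alpha'$ is "lower triangular" ($\ee\alpha'\ff=0$), we get $\ff\alpha'^{n-2}\ff=(\ff\alpha'\ff)^{n-2}$.

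We then split $\beta\ff\alpha'^{n-2}=\beta\ff\alpha'^{n-2}\ee+\beta(\ff\alpha'\ff)^{n-2}$.
\begin{itemize}
\item The first piece lies in $J^{n-1}\ee=\ee J^{n-1}\ff\ee=0$ by (4a).
\item For the second piece, apply (1) with $z=\beta(\ff\alpha'\ff)^{n-3}\in J^{n-2}$ (this is where $n\geq 4$, so $n-3\geq 1$, matters) and $y=\ff\alpha'\ff$. This gives $zy=\ee z(\ee y\ff)=0$, because $\ee\ff\alpha'\ff\ff=0$.
\end{itemize}
Thus $\beta\alpha^{n-2}=0=\alpha^{n-1}$ in this case, which completes (2).
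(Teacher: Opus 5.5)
Your proof is correct and follows the same route as the paper: part (1) by writing $yz=\ee yz\ff$ via (4a) and killing the $\ee y(\ee z\ff)$ term with (4b), and part (2) by the two-coset dichotomy from (4c) combined with part (1). The only divergence is the case $\ee\alpha\ff\in\ee J^2\ff$, which you flag as the main obstacle: the paper handles it in one line by applying the second identity of (1) directly with $z=\beta\alpha^{n-3}\in J^{n-2}$ and $y=\alpha$, which gives $\beta\alpha^{n-2}=\ee(\beta\alpha^{n-3})(\ee\alpha\ff)\in J^{n-2}J^{2}=\{0\}$, so your detour through $\alpha'=\alpha-\gamma$ and its triangularity is valid but unnecessary.
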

\begin{proof}
(1) Since $yz \in J^{n-1}$, we have $yz = \ee yz \ff$ by \ref{def: counterex}(4a). So,
\begin{equation*}
yz = \ee y (\ee + \ff) z \ff = \ee y (\ee z \ff) + (\ee y \ff) z \ff.
\end{equation*}
By \ref{def: counterex}(4b), $\ee z \ff \in J^{n-1}$, so $y (\ee z \ff) \in J^n=\{0\}$. Thus, $yz = (\ee y \ff) z \ff$. Similar steps will show that $zy = \ee z(\ee y \ff)$.\\

(2) Let $\alpha \in J$. We consider two cases depending on $\ee \alpha \ff$. First, assume that $\ee \alpha \ff \in \ee J^2 \ff$. Then, by part (1), 
\begin{equation*}
\alpha^{n-1} = (\ee \alpha \ff) \alpha^{n-2} \ff \subseteq J^n = \{0\},
\end{equation*}
and for $\beta \alpha^{n-2}$ we have
\begin{equation*}
\beta \alpha^{n-2} = (\beta \alpha^{n-3})\alpha = \ee(\beta \alpha^{n-3})(\ee \alpha \ff) \subseteq J^n = \{0\}.
\end{equation*}
So, the result holds when $\ee \alpha \ff \in \ee J^2 \ff$. Now assume that $\ee \alpha \ff \in \ee J \ff \setminus \ee J^2 \ff$. By \ref{def: counterex}(4c), $\beta$ and $\ee \alpha \ff$ are equivalent modulo $\ee J^2 \ff$, so $\beta = \ee \alpha \ff + \gamma$ for some $\gamma \in \ee J^2 \ff$. Note that $\gamma \alpha^{n-2} = 0$. So,
\begin{equation*}
\beta \alpha^{n-2} = \ee \alpha \ff \alpha^{n-2} + \gamma \alpha^{n-2} = \ee \alpha \ff \alpha^{n-2} = \alpha^{n-1},
\end{equation*}
as required.
\end{proof}

\begin{Thm}\label{thm: counterex}
Let $R$ be a counterexample ring. Then, $\mcN(R)$ is not a two-sided ideal of $R[x]$.
\end{Thm}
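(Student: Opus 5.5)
The plan is to write down an explicit polynomial $f \in \mcN(R)$ built from an element $\beta \in \ee J \ff \setminus \ee J^2 \ff$, which exists by Definition \ref{def: counterex}(4c). We then show that $f\ee \notin \mcN(R)$, using the element $r$ from Definition \ref{def: counterex}(3) as the witness. The key inputs are Lemma \ref{lem: counterex lem}(2) and the observation that $r^2 + r \in J$ for every $r \in R$. This observation holds because $R/J \cong \F_2 \oplus \F_2$ is Boolean, so $\pi(r)^2 = \pi(r)$.

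First, define
\begin{equation*}
f(x) := \beta (x^2+x)^{n-2} - (x^2+x)^{n-1} \in R[x].
\end{equation*}
The polynomials $(x^2+x)^{m}$ have integer coefficients, which are central. So after expanding and writing all coefficients to the left of the powers of $x$, right evaluation at any $r \in R$ gives
\begin{equation*}
f(r) = \beta (r^2+r)^{n-2} - (r^2+r)^{n-1}.
\end{equation*}
Setting $\alpha = r^2 + r \in J$, Lemma \ref{lem: counterex lem}(2) gives $\beta \alpha^{n-2} = \alpha^{n-1}$. Hence $f(r) = 0$ for all $r \in R$, and $f \in \mcN(R)$.

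Next, compute the product $f\ee$ in $R[x]$. Since $x$ is central and the integer coefficients commute with $\ee$,
\begin{equation*}
f\ee = \beta\ee\,(x^2+x)^{n-2} - \ee\,(x^2+x)^{n-1}.
\end{equation*}
Because $\beta \in \ee J \ff$, we have $\beta\ee = \beta\ff\ee = 0$. So $f\ee = -\ee(x^2+x)^{n-1}$, and for every $r \in R$,
\begin{equation*}
(f\ee)(r) = -\ee (r^2+r)^{n-1}.
\end{equation*}

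Finally, take $r$ as in Definition \ref{def: counterex}(3), so that $(r^2+r)^{n-1} \neq 0$. This element lies in $J^{n-1}$, and $J^{n-1} = \ee J^{n-1} \ff$ by (4a). It follows that $\ee(r^2+r)^{n-1} = (r^2+r)^{n-1} \neq 0$, so $(f\ee)(r) \neq 0$. Thus $f\ee \notin \mcN(R)$, and $\mcN(R)$ is not a right ideal of $R[x]$; in particular, it is not two-sided.

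The main point requiring care is the bookkeeping in the noncommutative evaluation. We must check that writing $f$ with coefficients on the left really yields $\beta(r^2+r)^{n-2}$, and that $f\ee$ has coefficients $\beta\ee c_i$ and $\ee c_i$ for the integer coefficients $c_i$. Both follow because those $c_i$ are central.
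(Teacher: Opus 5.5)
Your proposal is correct and follows essentially the same route as the paper. The paper uses $f = (x^2+x)^{n-1} - \beta(x^2+x)^{n-2}$, which is the negative of your polynomial. It shows $f \in \mcN(R)$ via Lemma \ref{lem: counterex lem}(2) and $\beta\ee = 0$, then uses Definition \ref{def: counterex}(3) and (4a) to show that $f\ee$ does not vanish at $r$.
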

\begin{proof}
Let $\beta \in \ee J \ff \setminus \ee J^2 \ff$. Form the following three polynomials:
\begin{equation*}
g(x) = (x^2+x)^{n-1}, \quad h(x) = \beta(x^2+x)^{n-2}, \quad \text{ and } \quad f(x) = g(x)-h(x).
\end{equation*}
Since $x^2 + x \in \mcN(R/J)$, we have $a^2+a \in J$ for all $a \in R$. By Lemma \ref{lem: counterex lem}(2), $g(a) = h(a)$ for all $a \in R$. Hence, $f \in \mcN(R)$. However, $f \ee \notin \mcN(R)$. To see this, note that $\beta \ee = 0$, so $h \ee = 0$ and $f \ee = g \ee$. By \ref{def: counterex}(3), there exists $r \in R$ such that $g(r) \ne 0$. Since $g(r) \in J^{n-1} = \ee J^{n-1} \ff$, we have $\ee g(r) = g(r) \ne 0$. Finally, $g \ee = \ee g$ because the coefficients of $g$ are central. Thus,
\begin{equation*}
(f\ee)(r) = (g\ee)(r) = (\ee g)(r) = \ee g(r) \ne 0.
\end{equation*}
We conclude that $f \ee \notin \mcN(R)$, and $\mcN(R)$ is not two-sided.
\end{proof}

\begin{Rem}\label{rem: counterex}
Let $\mcR$ be as in Example \ref{ex: Havlovec}. Then, $\mcR$ is a counterexample ring; one may check this directly, or apply Theorem \ref{thm: R_n rings} below. The polynomial $f$ constructed in Theorem \ref{thm: counterex} to show that $\mcN(\mcR)$ is not two-sided is slightly simpler than the analogous polynomial given in \cite{Havlovec1}.  Take
\begin{equation*}
\ee = \begin{pmatrix} 1&0&0&0\\0&0&0&0\\0&0&1&0\\0&0&0&0 \end{pmatrix} \quad \text{ and } \quad \beta = \begin{pmatrix} 0&1&0&0\\0&0&0&0\\0&0&0&1\\0&0&0&0 \end{pmatrix}.
\end{equation*}
Then, the polynomial from \cite{Havlovec1} is
\begin{align*}
\ee x^6 + \ee x^5 + (\ee + \beta) x^4 + \ee x^3 + \beta x^2 
&= \ee (x^6+x^5+x^4+x^3) + \beta(x^4+x^2)\\
&= \ee (x^2+x)^3 + \beta (x^2+x)^2,
\end{align*}
whereas the polynomial from Theorem \ref{thm: counterex} is $(x^2+x)^3 + \beta (x^2+x)^2$.
\end{Rem}

We close the paper by constructing, for each $n \geq 4$, a subring of $M_n(\F_2)$ for which the Jacobson radical has nilpotency $n$ and the null ideal is not two-sided. When $n=4$, the resulting ring is $\mcR$.

\begin{Def}\label{def: bigger rings}
For each $n \geq 4$, we define a collection $\mcR_n$ of matrices in $M_n(\F_2)$. In what follows, an asterisk $\ast$ denotes an entry of a matrix that has no restriction. That is, an $\ast$ entry can be either 0 or 1, and does not depend on any other entry of the matrix.

When $n$ is even, the matrices in $\mcR_n$ are $(n/2) \times (n/2)$ block matrices, where each block is $2 \times 2$. The diagonal blocks come from $T_2(\F_2)$, and matrices in $\mcR_n$ have identical diagonal blocks. Blocks below the diagonal are all 0, and blocks above the diagonal have no restrictions. So, when $n$ is even, the matrices in $\mcR_n$ have the form
\begin{equation}\label{eq: n even form}
\left(\begin{array}{cc|cc|c|cc}
a & b & \ast & \ast & \cdots & \ast & \ast\\
0 & c & \ast & \ast & \ddots & \ast & \ast\\
\hline
0 & 0 & a    & b    & \ddots & \ast & \ast\\
0 & 0 & 0    & c    & \ddots & \ast & \ast\\
\hline
0 & 0 & 0    & 0    & \ddots & a    & b   \\
0 & 0 & 0    & 0    & \cdots & 0    & c   \\
\end{array}\right)
\end{equation}
where $a, b, c \in \F_2$.

When $n$ is odd, write $n=2k+1$ for some $k \geq 2$. Each matrix in $\mcR_n$ has the form
\begin{equation*}
\begin{pmatrix} A & \ast \\ 0 & c \end{pmatrix},
\end{equation*}
where $A \in \mcR_{2k}$ and $c$ is the $(2k,2k)$-entry of $A$. Equivalently, matrices in $\mcR_n$ have the form
\begin{equation}\label{eq: n odd form}
\left(\begin{array}{cc|cc|c|cc|c}
a & b & \ast & \ast & \cdots & \ast & \ast & \ast\\
0 & c & \ast & \ast & \ddots & \ast & \ast & \ast\\
\hline
0 & 0 & a    & b    & \ddots & \ast & \ast & \ast\\
0 & 0 & 0    & c    & \ddots & \ast & \ast & \ast\\
\hline
0 & 0 & 0    & 0    & \ddots & a    & b    & \ast\\
0 & 0 & 0    & 0    & \ddots & 0    & c    & \ast\\
\hline
0 & 0 & 0    & 0    & \cdots & 0    & 0    & c\\
\end{array}\right)
\end{equation}
where $a, b, c \in \F_2$.

For the rings $\mcR_n$, we fix choices for $\ee$ and $\ff$. 
Following either \eqref{eq: n even form} or \eqref{eq: n odd form} (depending on whether $n$ is even or odd), let $\ee$ be the matrix with $a=1$ and all other entries equal to 0. This forces $\ff$ to be the matrix with $c=1$ and 0 in all other entries.
\end{Def}

It is straightforward to check that each set $\mcR_n$ from Definition \ref{def: bigger rings} is a ring. We will show that $\mcR_n$ is a counterexample ring when $n$ is even. When $n$ is odd, $\mcR_n$ does not meet the definition of a counterexample ring because $\ee J^{n-2} \ff$ contains matrices with nonzero entries in position $(1,n-1)$. So, $\ee J^{n-2} \ff \not\subseteq \ee J^{n-1} \ff$ and \ref{def: counterex}(4b) does not hold. Nevertheless, all of the other conditions in Definition \ref{def: counterex} still apply to $\mcR_n$, and we will show that Lemma \ref{lem: counterex lem}(2) is true for the ring. This is enough to prove that $\mcN(\mcR_n)$ is not two-sided when $n$ is odd. 

\begin{Lem}\label{lem: R_n has property (3)}
Let $n \geq 4$. Then, there exists $r_n \in \msJ(\mcR_n)$ such that $(r_n^2+r_n)^{n-1} \ne 0$.
\end{Lem}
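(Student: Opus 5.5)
The plan is to reduce the statement to finding a single element of $\msJ(\mcR_n)$ whose $(n-1)$-st power is nonzero, and then to exhibit the full superdiagonal shift matrix.

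\textbf{Reduction.} If $r \in \msJ(\mcR_n)$, then $1+r$ is a unit of $\mcR_n$ that commutes with $r$. Hence
\begin{equation*}
(r^2+r)^{n-1} = r^{n-1}(1+r)^{n-1}.
\end{equation*}
Consequently $(r^2+r)^{n-1} \neq 0$ if and only if $r^{n-1} \neq 0$. So it suffices to find $r_n \in \msJ(\mcR_n)$ with $r_n^{n-1} \neq 0$.

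\textbf{Identifying $\msJ(\mcR_n)$.} I will note that $\msJ(\mcR_n)$ is the set of matrices in $\mcR_n$ with $a=c=0$. This set is exactly the strictly upper triangular matrices lying in $\mcR_n$. It is a two-sided ideal because it is the kernel of the map $\mcR_n \to \F_2 \oplus \F_2$ sending a matrix to $(a,c)$, which is a ring homomorphism since the diagonal entries of upper triangular matrices multiply entrywise. It is nilpotent because strictly upper triangular $n\times n$ matrices satisfy $N^n=0$. The quotient $\F_2\oplus\F_2$ is semisimple, so this kernel is the Jacobson radical.

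\textbf{Choice of $r_n$.} Let $r_n = \sum_{i=1}^{n-1} E_{i,i+1}$, the matrix with $1$ in every superdiagonal position and $0$ elsewhere. I then check that $r_n \in \mcR_n$ by looking at where its nonzero entries fall.
\begin{itemize}
\item The entries at $(2j-1,2j)$ are the $b$-entries of the diagonal $2\times 2$ blocks. These all equal $1$ simultaneously, as the definition requires, and $a=c=0$ throughout.
\item The entries at $(2j,2j+1)$ lie in $\ast$ positions of superdiagonal blocks, so they are unrestricted.
\item When $n$ is odd, the entry $(n-1,n)$ lies in the free last column.
\end{itemize}
Thus $r_n \in \mcR_n$, and since $a=c=0$ we have $r_n \in \msJ(\mcR_n)$.

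\textbf{Conclusion.} A direct computation gives $r_n^{\,n-1} = E_{1,n} \neq 0$. For the explicit value, $E_{1,n} r_n = 0$, so $E_{1,n}(1+r_n)^{n-1} = E_{1,n}$ and therefore $(r_n^2+r_n)^{n-1} = E_{1,n}$.

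The only step needing any care is confirming membership $r_n \in \mcR_n$. This comes down to the constraint that all diagonal blocks be identical, which holds because every diagonal block of $r_n$ is the same matrix $\begin{pmatrix}0&1\\0&0\end{pmatrix}$.
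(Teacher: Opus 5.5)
Your proof is correct, and it is simpler than the paper's. The paper uses the same superdiagonal matrix $\alpha_n=\sum_i E_{i,i+1}$ and the same fact $\alpha_n^{n-1}=E_{1,n}\neq 0$, but only as a target. It builds, recursively from an explicit $r_4$, an element $r_n\in\msJ(\mcR_n)$ with $r_n^2+r_n=\alpha_n$ exactly, adjoining at each step the column $v_n=(r_n+I_n)^{-1}w_n$.

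Your factorization $(r^2+r)^{n-1}=r^{n-1}(1+r)^{n-1}$, with $1+r$ a unit commuting with $r$, shows that no such equation needs solving: $r_n=\alpha_n$ already works. In fact $(r^2+r)^{n-1}=r^{n-1}$ for every $r\in\msJ(\mcR_n)$, because $r^n=0$. This removes the recursion. It also removes the membership claim $r_{n+1}\in\msJ(\mcR_{n+1})$, which the paper states without checking. When $n$ is odd, that claim needs the last entry of $v_n$ to be $1$, so that the new diagonal block matches $b=1$; this holds but is not mentioned.

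The paper's construction proves the slightly stronger fact that $\alpha_n$ itself has the form $r^2+r$ with $r$ in the radical, but nothing later uses this. Your reduction, by contrast, is a general remark. It makes condition (3) of Definition~\ref{def: counterex} automatic for any such ring whose radical contains an element with nonzero $(n-1)$-st power.
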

\begin{proof}
For any value of $n$, the $n \times n$ matrix $\alpha_n$ with 1 along the first superdiagonal and 0 elsewhere is such that $\alpha_n^{n-1}$ has 1 in the $(1,n)$-entry and 0 elsewhere. In particular, $\alpha_n^{n-1} \ne 0$. For each $n$, $\alpha_n$ is an element of $\mcR_n$, so it suffices to show that there exists $r_n \in \msJ(\mcR_n)$ such that $r_n^2+r_n = \alpha_n$. We will define $r_n$ recursively.

When $n=4$, we can take
\begin{equation*}
r_4 = \begin{pmatrix} 0&1&1&0\\0&0&1&1\\0&0&0&1\\0&0&0&0 \end{pmatrix}, \quad \text{ for which } \quad r_4^2+r_4 = \begin{pmatrix} 0&1&0&0\\0&0&1&0\\0&0&0&1\\0&0&0&0 \end{pmatrix}.
\end{equation*}
Assuming we have determined $r_n \in \msJ(\mcR_n)$, let $I_n$ be the $n \times n$ identity matrix and let $w_n$ be the $n \times 1$ matrix $w_n = (0 \, \cdots \, 0 \; 1)^T$. Since $r_n \in \msJ(\mcR_n)$, the matrix $r_n+I_n$ is invertible. Take $v_n = (r_n + I_n)^{-1} w_n$ and let $r_{n+1}$ be the $(n+1) \times (n+1)$ matrix
\begin{equation*}
r_{n+1} = \left(\begin{array}{c|c}
r_n & v_n\\ \hline 0 & 0
\end{array}\right).
\end{equation*}
Then, $r_{n+1} \in \msJ(\mcR_{n+1})$ and
\begin{equation*}
r_{n+1}^2 + r_{n+1} = 
\left(\begin{array}{c|c}
r_n^2 & r_n v_n\\ \hline 0 & 0
\end{array}\right) 
+ 
\left(\begin{array}{c|c}
r_n & v_n\\ \hline 0 & 0
\end{array}\right) 
=
\left(\begin{array}{c|c}
r_n^2 + r_n & r_n v_n + v_n\\ \hline 0 & 0
\end{array}\right).
\end{equation*}
By construction, $r_n^2+r_n = \alpha_n$ and $r_n v_n + v_n = w_n$. Thus, $r_{n+1}^2 + r_{n+1} = \alpha_{n+1}$.
\end{proof}

\begin{Lem}\label{lem: R_n is a counterex}
Let $n \geq 4$. If $n$ is even, then $\mcR_n$ is a counterexample ring. If $n$ is odd, then $\mcR_n$ satisfies each property in Definition \ref{def: counterex} except \ref{def: counterex}(4b).
\end{Lem}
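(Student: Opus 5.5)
Each condition of Definition \ref{def: counterex} will be verified by explicit matrix computation, with the Peirce pieces described via matrix positions. Index rows and columns $1,\dots,n$ and call position $i$ \emph{odd type} if $i$ is odd and \emph{even type} if $i$ is even. In the odd case, the last index $n=2k+1$ is declared even type, since its diagonal entry is $c$. Then $\ee$ is the diagonal idempotent supported on the odd-type positions and $\ff$ the one supported on the even-type positions.

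\textbf{Step 1: the radical and condition (1).} The map $\mcR_n \to \F_2\oplus\F_2$ sending a matrix to $(a,c)$ is a surjective ring homomorphism. Its kernel $J_0$ consists of strictly upper triangular matrices in $\mcR_n$, so $J_0$ is nilpotent. Hence $J=J_0$ and $\mcR_n/J\cong\F_2\oplus\F_2$. The elements $\ee,\ff$ defined above are the lifted idempotents.

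\textbf{Step 2: powers of $J$, and conditions (2) and (3).} Describe $J^k$ by entries. A matrix in $J$ has free entries at every $(i,j)$ with $j\geq i+1$, except that the within-block entries $(2m-1,2m)$ must all equal a common value $b$. I claim $J^k$ consists of the matrices in $\mcR_n$ that are supported on $j-i\ge k$, subject to the same tied-entry constraint when $k=1$. To prove this, I would show that products of $k$ radical elements vanish below the $k$-th superdiagonal, and that every elementary matrix $E_{ij}$ with $j-i\ge k$, for $k\ge2$, is a product of $k$ elements of $J$. For the second part, route through intermediate indices using the free entries $E_{i,i+2}$, $E_{i,i+3}$, and so on, together with the superdiagonal element $\alpha_n$, which lies in $\mcR_n$.

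From this description, $J^{n-1}=\F_2E_{1n}\ne0$ and $J^n=0$, which gives condition (2). Condition (3) is exactly Lemma \ref{lem: R_n has property (3)}.

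\textbf{Step 3: Peirce conditions.} Position $1$ has odd type. Position $n$ has even type in both cases: when $n$ is even because $n$ itself is even, and when $n$ is odd by convention. Hence $J^{n-1}=\ee J^{n-1}\ff$, which is (4a).

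For (4b) with $n$ even, $J^{n-2}$ is spanned by $E_{1,n-1}$, $E_{2,n}$ and $E_{1,n}$. Positions $1$ and $n-1$ both have odd type, and positions $2$ and $n$ both have even type, so $\ee J^{n-2}\ff$ is spanned by $E_{1n}$ alone, giving (4b). For $n$ odd, position $n-1$ is even, so $E_{1,n-1}\in\ee J^{n-2}\ff$ and (4b) fails, consistent with the statement.

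For (4c), $\ee J\ff$ consists of the entries of $J$ at (odd, even) positions, and $\ee J^2\ff$ is obtained from it by dropping the superdiagonal entries. The (odd, even) superdiagonal entries are exactly the tied entries $b$ at $(2m-1,2m)$, which form a single $\F_2$-parameter. In the odd case, the entry $(n-1,n)$ is (even, even) type and so does not belong to $\ee J\ff$. Therefore the index is $2$.

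\textbf{Main obstacle.} The hardest step is the description of $J^k$ in Step 2, specifically showing that every $E_{ij}$ with $j-i\ge k$ actually lies in $J^k$ despite the tied diagonal-block constraint. I would handle the endpoints carefully: for example, write $E_{1n}$ for $n$ even as a product of free off-block entries with one copy of $\alpha_n$ to absorb parity issues. Only the top corner positions matter for conditions (2)–(4), so it suffices to verify the relevant $E_{ij}$ rather than the full description.
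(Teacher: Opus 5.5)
Your proposal is correct and follows the same route as the paper: identify $J$ as the strictly upper triangular part of $\mcR_n$, then check (1)–(4c) by recording which matrix positions can be nonzero in $J^{n-1}$, $J^{n-2}$ and $J^2$ and comparing them with the supports of $\ee$ and $\ff$. Your explicit description of $J^k$ for $k\ge 2$ is an improvement, because it supplies a fact the paper's index-$2$ claim in (4c) uses tacitly: every $E_{ij}$ with $i$ of odd type, $j$ of even type and $j-i\ge 2$ lies in $\ee J^2\ff$. The only slip is your aside about needing ``one copy'' of $\alpha_n$ to get $E_{1n}$; that route would actually need $n/2$ such factors, and none of this is required because $E_{1n}=\alpha_n^{n-1}$ directly.
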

\begin{proof}
We first examine the properties in Definition \ref{def: counterex} that apply to all the rings $\mcR_n$. Let $J = \msJ(\mcR_n)$, which is the set of all strictly upper triangular matrices in $\mcR_n$. The diagonal entries of a matrix in $\mcR_n$ are determined by the two parameters $a \in \F_2$ and $c \in \F_2$ in \eqref{eq: n even form} and \eqref{eq: n odd form}, so $\mcR_n/J \cong \F_2 \oplus \F_2$. Next, the nilpotency of $J$ is at most $n$ because $\mcR_n \subseteq T_n(\F_2)$, and the matrix $\alpha_n$ defined in the proof of Lemma \ref{lem: R_n has property (3)} demonstrates that $J^{n-1} \ne \{0\}$. Thus, the nilpotency of $J$ is exactly $n$. This shows that \ref{def: counterex}(1) and \ref{def: counterex}(2) hold for $\mcR_n$, and \ref{def: counterex}(3) is satisfied by Lemma \ref{lem: R_n has property (3)}.

Now, if $\alpha \in J^{n-1}$, then the $(1,n)$-entry of $\alpha$ is either 0 or 1, and all other entries of $\alpha$ are 0. So, $\ee \alpha \ff = \alpha$ and $J^{n-1} = \ee J^{n-1} \ff$. Similarly, if $\alpha \in J^{n-2}$ then the only possible nonzero entries of $\alpha$ occur in positions $(1,n-1)$, $(1,n)$, and $(2,n)$. For such an $\alpha$, if $n$ is even then $(1,n-1)$-entry and $(2,n)$-entry of $\ee \alpha \ff$ are both 0. So, $\ee \alpha \ff \in \ee J^{n-1} \ff$ when $n$ is even, although this may fail when $n$ is odd, as discussed after Definition \ref{def: bigger rings}. Finally, regardless of the parity of $n$, the coset representatives of $\ee J \ff / \ee J^2 \ff$ correspond to the possible choices for $b$ in \eqref{eq: n even form} and \eqref{eq: n odd form}. Since $b \in \F_2$, we have $|\ee J \ff / \ee J^2 \ff| = 2$. So, $\mcR_n$ is a counterexample ring when $n$ is even, and if $n$ is odd then properties \ref{def: counterex}(4a) and \ref{def: counterex}(4c) still hold for $\mcR_n$.
\end{proof}

\begin{Lem}\label{lem: n odd counterex}
Let $n \geq 5$ be odd and let $J = \msJ(\mcR_n)$. Then, there exists $\beta \in \ee J \ff \setminus \ee J^2 \ff$ such that $\beta \alpha^{n-2} = \alpha^{n-1}$ for all $\alpha \in J$.
\end{Lem}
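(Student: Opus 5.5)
The plan is to take for $\beta$ the element of $\mcR_n$ obtained from \eqref{eq: n odd form} by setting $b=1$ and every other entry, including $a$, $c$ and all the $\ast$ entries, equal to $0$. Write $n = 2k+1$ with $k \geq 2$. Then $\beta$ is the matrix with $1$ in positions $(1,2),(3,4),\ldots,(2k-1,2k)$ and $0$ elsewhere.

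First we check that $\beta \in \ee J \ff \setminus \ee J^2 \ff$.
\begin{itemize}
\item $\beta$ is strictly upper triangular, so $\beta \in J$.
\item In the matrix picture, $\ee$ is the diagonal matrix with $1$ in positions $1,3,\ldots,2k-1$, and $\ff$ is the diagonal matrix with $1$ in positions $2,4,\ldots,2k$ and $n$. Hence $\ee\beta\ff = \beta$, so $\beta \in \ee J \ff$.
\item By the proof of Lemma \ref{lem: R_n is a counterex}, the cosets of $\ee J^2 \ff$ in $\ee J \ff$ are detected by the parameter $b$. Since $b=1$ for $\beta$, we get $\beta \notin \ee J^2 \ff$.
\end{itemize}

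Next we use the entry pattern of powers of $\alpha \in J$. Since $\alpha$ is strictly upper triangular, the $(i,j)$-entry of $\alpha^m$ is $0$ unless $j-i \geq m$. When $j-i = m$ there is exactly one path $i \to i+1 \to \cdots \to j$, so that entry is the product $\alpha_{i,i+1}\alpha_{i+1,i+2}\cdots\alpha_{j-1,j}$ of superdiagonal entries. Consequently:
\begin{itemize}
\item $\alpha^{n-1}$ has at most one nonzero entry, namely $(\alpha^{n-1})_{1,n} = \alpha_{12}\alpha_{23}\cdots\alpha_{n-1,n}$.
\item $\alpha^{n-2}$ is supported on positions $(1,n-1)$, $(1,n)$ and $(2,n)$, with $(\alpha^{n-2})_{2,n} = \alpha_{23}\cdots\alpha_{n-1,n}$.
\end{itemize}

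Now compute $\beta\alpha^{n-2}$. Row $i$ of the product is nonzero only if $\beta$ has a $1$ in row $i$, that is, $i \in \{1,3,\ldots,2k-1\}$; such a row picks out row $i+1$ of $\alpha^{n-2}$. Only row $2$ of $\alpha^{n-2}$ can be nonzero, so only $i=1$ contributes. Thus $\beta\alpha^{n-2}$ has $(1,n)$-entry $(\alpha^{n-2})_{2,n} = \alpha_{23}\cdots\alpha_{n-1,n}$ and all other entries $0$.

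It therefore remains to show $\alpha_{12}\,P = P$, where $P = \alpha_{23}\cdots\alpha_{n-1,n}$. The key observation is that for $\alpha \in \mcR_n$ the superdiagonal entries $\alpha_{12}$ and $\alpha_{34}$ are both the same parameter $b$ of $\alpha$. Since $n \geq 5$ gives $4 \leq n-1$, the factor $\alpha_{34}$ occurs in $P$. So $P = b\,P'$ for some $P' \in \F_2$, and $\alpha_{12}P = b^2P' = bP' = P$ in $\F_2$. This yields $\beta\alpha^{n-2} = \alpha^{n-1}$.

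The main obstacle is the index bookkeeping. One must confirm that the block structure of \eqref{eq: n odd form} really forces $\alpha_{34} = \alpha_{12}$ even though the last row and column are extra. One must also confirm that the $(1,n-1)$ entry of $\alpha^{n-2}$, which is what breaks (4b), never enters the product $\beta\alpha^{n-2}$; it does not, because row $1$ of $\alpha^{n-2}$ is multiplied only by column entries of $\beta$, and column $1$ of $\beta$ is zero.
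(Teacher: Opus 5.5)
Your proof is correct, but you verify the identity by a different route than the paper.

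\textbf{The paper's route.} The paper writes elements of $\mcR_n$ as $\begin{pmatrix} A & v\\ 0 & c\end{pmatrix}$ with $A \in \mcR_{n-1}$. It uses that $\mcR_{n-1}$ is a counterexample ring (Lemma \ref{lem: R_n is a counterex}, since $n-1$ is even) and takes any $\beta_1$ supplied by Lemma \ref{lem: counterex lem}(2) for $\mcR_{n-1}$. It then sets $\beta = \begin{pmatrix} \beta_1 & 0\\ 0 & 0\end{pmatrix}$. The identity follows from the block formula $\alpha^m = \begin{pmatrix}\alpha_1^m & \alpha_1^{m-1}v\\ 0 & 0\end{pmatrix}$ together with $\beta_1\alpha_1^{n-3} = \alpha_1^{n-2}$ and $\alpha_1^{n-1} = 0$.

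\textbf{Your route.} Your $\beta$ is one of the paper's choices, namely $\beta_1$ equal to the $b=1$ matrix of $\mcR_{n-1}$. However, you bypass the axiomatic machinery of conditions (4a)--(4c) entirely. You observe that $\alpha^{n-1}$ and $\beta\alpha^{n-2}$ are both supported only at $(1,n)$, with entries $\alpha_{12}P$ and $P$, where $P=\alpha_{23}\cdots\alpha_{n-1,n}$. The equality then reduces to $b^2=b$ in $\F_2$, because $b$ occurs both as $\alpha_{12}$ and as the factor $\alpha_{34}$ of $P$. You also correctly check that the troublesome $(1,n-1)$-entry of $\alpha^{n-2}$ never enters the product.

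\textbf{What each buys.} Your argument is self-contained and makes transparent why the identity holds. The same computation also works for even $n \geq 4$, so it gives the conclusion of Lemma \ref{lem: counterex lem}(2) for every $\mcR_n$ directly. The paper's argument instead stays within its structural framework, showing that the odd case inherits the key property from the even one. That fits the paper's aim of isolating abstract reasons for the failure of two-sidedness.

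\textbf{A minor point.} For $\beta \notin \ee J^2\ff$, it is cleaner to argue directly rather than cite the coset description. Every element of $J^2$ has zero first superdiagonal, while $\beta_{12}=1$.
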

\begin{proof}
As noted in Definition \ref{def: bigger rings}, any matrix in $\mcR_n$ can be written as
\begin{equation*}
\begin{pmatrix} A & v \\ 0 & c \end{pmatrix},
\end{equation*}
where $A \in \mcR_{n-1}$, $v$ is an $(n-1) \times 1$ matrix, and $c$ is equal to the $(n-1,n-1)$-entry of $A$. Since $\mcR_{n-1}$ is a counterexample ring by Lemma \ref{lem: R_n is a counterex}, we may apply Lemma \ref{lem: counterex lem} to find $\beta_1 \in \ee\msJ(\mcR_{n-1})\ff$ such that $\beta_1 \alpha_1^{n-3} = \alpha_1^{n-2}$ for all $\alpha_1 \in \msJ(\mcR_{n-1})$. Take
\begin{equation*}
\beta = \begin{pmatrix} \beta_1 & 0 \\ 0 & 0 \end{pmatrix} \in \mcR_n.
\end{equation*}
Then, $\beta \in \ee J \ff \setminus \ee J^2 \ff$. Let $\alpha \in J$ and write
\begin{equation*}
\alpha = \begin{pmatrix} \alpha_1 & v \\ 0 & 0 \end{pmatrix},
\end{equation*}
where $\alpha_1 \in \msJ(\mcR_{n-1})$ and $v$ is $(n-1) \times 1$. Note that $\alpha_1^{n-1} = 0$ and $\beta_1 \alpha_1^{n-2} = 0$ because $\msJ(\mcR_{n-1})$ has nilpotency $n-1$. So,
\begin{equation*}
\alpha^{n-1} = \begin{pmatrix} \alpha_1^{n-1} & \alpha_1^{n-2} v \\ 0 & 0 \end{pmatrix} =  \begin{pmatrix} 0 & \alpha_1^{n-2} v \\ 0 & 0 \end{pmatrix},
\end{equation*}
whereas
\begin{equation*}
\beta \alpha^{n-2} = \begin{pmatrix} \beta_1 & 0 \\ 0 & 0 \end{pmatrix} \begin{pmatrix} \alpha_1^{n-2} & \alpha_1^{n-3} v \\ 0 & 0 \end{pmatrix} =  \begin{pmatrix} \beta_1\alpha_1^{n-2} & \beta_1\alpha_1^{n-3} v \\ 0 & 0 \end{pmatrix} = \begin{pmatrix} 0 & \alpha_1^{n-2} v \\ 0 & 0 \end{pmatrix}.
\end{equation*}
Thus, $\beta \alpha^{n-2} = \alpha^{n-1}$ for all $\alpha \in J$, as desired.
\end{proof}

\begin{Thm}\label{thm: R_n rings}
Let $n \geq 4$. Then, $\mcN(\mcR_n)$ is not a two-sided ideal of $\mcR_n[x]$.
\end{Thm}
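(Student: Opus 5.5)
The plan is to split according to the parity of $n$. For even $n$, Lemma \ref{lem: R_n is a counterex} says that $\mcR_n$ is a counterexample ring, so Theorem \ref{thm: counterex} applies directly and $\mcN(\mcR_n)$ is not two-sided. The remaining work is the odd case $n \geq 5$. There we rerun the proof of Theorem \ref{thm: counterex}, with Lemma \ref{lem: n odd counterex} taking the place of Lemma \ref{lem: counterex lem}(2).

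For odd $n$, fix $\beta \in \ee J \ff \setminus \ee J^2 \ff$ as provided by Lemma \ref{lem: n odd counterex}, where $J = \msJ(\mcR_n)$. Define $g(x) = (x^2+x)^{n-1}$, $h(x) = \beta (x^2+x)^{n-2}$ and $f = g - h$.

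The first step is to show that $f \in \mcN(\mcR_n)$. Since $\mcR_n/J \cong \F_2 \oplus \F_2$ by Lemma \ref{lem: R_n is a counterex}, every $a \in \mcR_n$ satisfies $\alpha := a^2+a \in J$. The coefficients of $g$ are central, so $g(a) = \alpha^{n-1}$. Because $h$ has the form $\beta \cdot p(x)$ with $p$ having central coefficients, $h(a) = \beta\, p(a) = \beta \alpha^{n-2}$. Lemma \ref{lem: n odd counterex} then gives $g(a) = h(a)$ for every $a$, hence $f(a) = 0$.

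The second step is to show that $f\ee \notin \mcN(\mcR_n)$. Since $\beta \in \ee J \ff$, we have $\beta \ee = 0$, so $h\ee = 0$ and $f\ee = g\ee = \ee g$, using that the coefficients of $g$ are central. Lemma \ref{lem: R_n has property (3)} supplies $r$ with $g(r) \neq 0$. Also $g(r) \in J^{n-1} = \ee J^{n-1}\ff$, because property (4a) holds for odd $n$ by Lemma \ref{lem: R_n is a counterex}. Therefore $(f\ee)(r) = \ee g(r) = g(r) \neq 0$.

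There is no real obstacle here, since the needed lemmas are already in place. The only thing to double-check is that the proof of Theorem \ref{thm: counterex} uses property (4b) solely through Lemma \ref{lem: counterex lem}(2), so that substituting Lemma \ref{lem: n odd counterex} is legitimate. We should also confirm that the $\beta$ from Lemma \ref{lem: n odd counterex} lies in $\ee J\ff$; this follows from its block form together with the fact that $\beta_1 \in \ee\msJ(\mcR_{n-1})\ff$.
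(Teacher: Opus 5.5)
Your proposal is correct and matches the paper's proof. The even case follows from Lemma \ref{lem: R_n is a counterex} together with Theorem \ref{thm: counterex}, and the odd case reruns the proof of Theorem \ref{thm: counterex} with Lemma \ref{lem: n odd counterex} in place of Lemma \ref{lem: counterex lem}(2). You also make explicit two points the paper leaves implicit: property (4b) is used only through Lemma \ref{lem: counterex lem}(2), and (4a) still holds for odd $n$.
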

\begin{proof}
When $n$ is even, this follows from Lemma \ref{lem: R_n is a counterex} and Theorem \ref{thm: counterex}. When $n$ is odd, let $J = \msJ(\mcR_n)$. By Lemma \ref{lem: n odd counterex} there exists $\beta \in \ee J \ff \setminus \ee J^2 \ff$ such that $\beta \alpha^{n-2} = \alpha^{n-1}$ for all $\alpha \in J$. The existence of this $\beta$, along with the properties of counterexample rings that $\mcR_n$ does exhibit, are enough to construct the polynomials $f$, $g$, and $h$ from the proof of Theorem \ref{thm: counterex} and conclude that $\mcN(\mcR_n)$ is not two-sided when $n$ is odd.
\end{proof}

Each of the rings $\mcR_n$ has $\mcR=\mcR_4$ as a residue ring. Indeed, let $\mcI_n \subseteq \mcR_n$ be the set of all matrices in $\mcR_n$ with 0 in every entry in columns 1--4. Then, $\mcI_n$ is a two-sided ideal of $\mcR_n$ and $\mcR_n/\mcI_n \cong \mcR_4$. This observation prompts one final question for further research.

\begin{Ques}\label{ques: modding out}
Does there exist a finite ring $R$ such that $\mcN(R)$ is not two-sided and $R$ does not have $\mcR_4$ as a residue ring?
\end{Ques}

\section*{AI Use Statement}
No artificial intelligence (AI), large language model (LLM), or similar tool was used in this project, nor in the preparation of this paper.

\bibliographystyle{plainurl}
\bibliography{Index_arXiv}
 
\end{document}